\documentclass{article}
\usepackage{amsfonts}
\usepackage{amsmath}
\usepackage{amsthm}
\usepackage{url}
\usepackage{hyperref}
\usepackage[capitalise, noabbrev]{cleveref}

% ********* THEOREM COUNTERS *********
\newtheorem{theorem}{Theorem} %[section]

\newtheorem{lemma}[theorem]{Lemma}
\newtheorem{corollary}[theorem]{Corollary}

\newtheorem{definition}[theorem]{Definition}

% \newtheorem{openquestion}{Open Question}[section]

% Custom commands.
\newcommand{\F}{\mathbb{F}}

\DeclareMathOperator{\PG}{PG}
\DeclareMathOperator{\AG}{AG}
\newcommand\st{\,:\,}

\begin{document}
\date{March 16, 2024}
\title{Mutually orthogoval projective and affine spaces}
\author{Mark Saaltink}

\maketitle

\begin{abstract}
  A recent paper %~\cite{CIJSSS2022} -- avoid citation in abstract!
  showed how to find sets of finite affine or projective planes constructed on a common set of points, so that lines of one plane meet lines of a different plane in at most two points.  In this paper, those results are generalized in two different ways to spaces of higher dimension.  The simpler of the two generalizations admits many solutions, both affine and projective.  For the stronger definition, where a line of one space must be an arc in the other, we show the existence of pairs of projective spaces of dimension one less than a prime.
\end{abstract}

\section{Introduction}

It is an old result, provable in several different ways, that for every projective plane $\PG(2, \F_q)$ there exists a second projective plane on the same set of points, whose lines are \emph{ovals} of the first plane,
that is, lines of the first plane meet lines of the second in at most two points.
This result was recently extended~\cite{CIJSSS2022} to certain affine planes.
In that paper, the authors define ``orthogoval'' as follows:
\begin{quote}
A pair of planes, both projective or both affine, of the same order and on the same point set are \emph{orthogoval} if each line of one plane intersects each line of the other plane in at most two points.
\end{quote}
That paper goes on to exhibit pairs (and sometimes larger sets) of orthogoval planes over many different fields and gives a new construction for pairs of orthogoval projective planes.

Background information on finite geometries can be found in \cite{Beutelspacher1998,casse_projective_2006,dembowski1968,hirschfeld_projective_1998}.
We write $\F_q$ for the finite field with $q$ elements,
$\AG(d, K)$ for the affine space of dimension $d$ over field $K$, and
$\PG(d, K)$ for the projective space of dimension $d$ over field $K$.
The letter $q$ always denotes a prime power.

Specific results in~\cite{CIJSSS2022} include the following:
\begin{itemize}
\item For every $n$ there is a pair of orthogoval $\AG(2, \F_{2^n})$ (\cite[Corollary 3.9]{CIJSSS2022}); if $n$ is relatively prime to 6 then there is a set of three mutually orthogoval $\AG(2, \F_{2^n})$ (\cite[Corollary 3.16]{CIJSSS2022}).
\item There is a set of seven mutually orthogoval $\AG(2,\F_3)$ (\cite[Theorem 3.1]{CIJSSS2022}).
  No pairs of orthogoval affine planes of other odd orders are known.
\item There are sets of seven mutually orthogoval $\AG(2,\F_4)$ and seven mutually orthogoval $\AG(2,\F_8)$ (\cite[Theorem 3.19]{CIJSSS2022}).
\item For every prime power $q$ there is a pair of orthogoval $\PG(2,\F_q)$ (\cite[Theorem 2.2]{CIJSSS2022}).
  \item There is a set of four mutually orthogoval $\PG(2,\F_3)$, but no more than two mutually orthogoval $\PG(2,\F_q)$ for $q \in \{2,4,5\}$ (\cite[Theorem 2.3]{CIJSSS2022}).
\end{itemize}

An open question in that paper is whether this concept and these results can be generalized to higher-dimensional structures.
Here we answer that question in the affirmative for two different generalizations.  For the first, and simplest, of these, we just replace the word ``planes'' with ``spaces'' in the definition of orthogoval.  This gives the following generalization:
\begin{definition} \label{main-def}
  A pair of spaces, both projective or both affine, of the same dimension and order and on the same point set are \emph{orthogoval} if each line of one space intersects each line of the other space in at most two points.
\end{definition}
Using Segre's terminology, we can restate the definition as ``\dots if each line of one space is a cap in the other'', as a \emph{cap} is a set of points that meets any line in at most two points, a natural generalization of an oval in the plane.
% Caps can have many more than $q+1$ points.

This generalization is not vacuous; in this paper we show the existence of
\begin{itemize}
\item a pair of orthogoval $\AG(k, \F_{2^n})$ for any $k \geq 2$
(\cref{thm:ag_k_2n}),
  \item  sets of three mutually orthogoval $\AG(2k, \F_{2^n})$ when $n$ is relatively prime to 6 (\cref{thm:3-lo-ag2k-f2n}),
\item  sets of seven  mutually orthogoval $\AG(k, \F_3)$ (\cref{thm:lo-ag3-f3}),
  seven  mutually orthogoval $\AG(2k, \F_4)$ (\cref{thm:seven-f4-f8}), and
  seven  mutually orthogoval $\AG(2k, \F_8)$ (\cref{thm:seven-f4-f8}) for any $k \geq 1$,
\item  two orthogoval $\PG(2k, \F_q)$ for any $k$ and any prime power $q$ (\cref{thm:lo-pg2k-fq}),
  \item a set of 6 mutually orthogoval $\PG(4, \F_2)$, % (\cref{thm:big-sets-pg}),
 a set of 18 mutually orthogoval $\PG(6, \F_2)$, % (\cref{thm:big-sets-pg}),
 a set of 10 mutually orthogoval $\PG(4, \F_3)$, % (\cref{thm:big-sets-pg}),
 a set of 78 mutually orthogoval $\PG(6, \F_3)$, % (\cref{thm:big-sets-pg}),
 a set of 3 mutually orthogoval $\PG(4, \F_4)$, % (\cref{thm:big-sets-pg}),
 a set of 11 mutually orthogoval $\PG(6, \F_4)$, and % (\cref{thm:big-sets-pg}), and
 a set of 7 mutually orthogoval $\PG(4, \F_5)$ (\cref{thm:big-sets-pg}),
  % \item a set of 3 mutually orthogoval $\PG(4, \F_7)$ (\cref{thm:big-sets-pg}),
  \item a pair of orthogoval $\PG(3,\F_3)$ (\cref{thm:lo-pg3-f3}),
  \item arbitrarily large sets of mutually orthogoval projective spaces (of high dimension) over any $\F_q$, unless $q+1$ is a power of 2
    (\cref{th:arb-large-pg}).
\end{itemize}

In \cref{sec:other-gen} we discuss two other possible generalizations of the notion of orthogoval.  Both are stronger than the above definition. We show a construction for one of them, which uses arcs instead of caps, in \cref{thm:hyperplane-ortho} but know of no construction for the other.  The definition based on arcs uses terminology that extends its applicability, as will be described in \cref{sec:askew}.
\begin{definition} \label{def:askew}
  A pair of dimension $k$ spaces, both projective or both affine, of the same order and on the same point set are \emph{askew} if the points of any line of one space are in general linear position in the other.
\end{definition}
Our main result for this definition is that if $k+1$ is prime then there is a pair of askew $\PG(k, \F_q)$ (\cref{thm:hyperplane-ortho}).

\section{Constructions of orthogoval affine spaces}

In this section we generalize the construction of \cite[Section 3]{CIJSSS2022} to give orthogoval
affine spaces of dimension larger than 2.
We start with a generalization of \cite[Lemma 3.2]{CIJSSS2022}.
\begin{lemma} \label{lemma:a-b-no-roots}
  Let $\F = \F_q$ be a finite field and $p \in \F[x]$ be any polynomial.  Then there are $a, b \in \F$ such that $p(x) + ax + b$ has no roots in $\F$.
\end{lemma}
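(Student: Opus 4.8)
The plan is to reduce the statement to finding a single value of $a$ for which the evaluation map $g_a \colon \F \to \F$, $g_a(x) = p(x) + ax$, fails to be surjective. Once such an $a$ is in hand, any $b$ with $-b$ outside the image of $g_a$ finishes the proof, since $p(x) + ax + b = 0$ is equivalent to $g_a(x) = -b$, and the latter has no solution precisely when $-b \notin g_a(\F)$.

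First I would note that because $\F$ is finite, $g_a$ fails to be surjective exactly when it fails to be injective, so it suffices to produce an $a$ for which $g_a$ identifies two distinct points. This is where the only idea enters: pick any two distinct elements $x_0, y_0 \in \F$ — possible since $q \ge 2$ — and solve $g_a(x_0) = g_a(y_0)$ for $a$. Writing this out, $p(x_0) + a x_0 = p(y_0) + a y_0$ has the unique solution $a = -\bigl(p(x_0) - p(y_0)\bigr)/(x_0 - y_0)$, which is well defined because $x_0 \ne y_0$. By construction $g_a(x_0) = g_a(y_0)$, so $g_a$ is not injective, hence not surjective, and its image is a proper subset of $\F$.

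Finally I would pick any $c \in \F$ not in the image of $g_a$ and set $b = -c$; then $p(x) + ax + b \ne 0$ for every $x \in \F$, as required. I do not expect a genuine obstacle: the argument is completely degree-agnostic and uses nothing about $p$ beyond its values at two points. The only things to keep an eye on are the degenerate situations — $q = 2$, where there is exactly one pair available to collapse, and the possibility that the $a$ produced is $0$ — but the reasoning is uniform and needs only $q \ge 2$, which always holds for a prime power. It is worth remarking that the same bookkeeping (each unordered pair $\{x,y\}$ pins down exactly one value of $a$) shows that in fact most choices of $a$ already work, though we need only one.
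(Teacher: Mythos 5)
Your proof is correct and takes essentially the same approach as the paper: the paper also chooses $a = -\bigl(p(x)-p(y)\bigr)/(x-y)$ to force two roots of $p(x)+ax+b$ to coincide and then concludes by pigeonhole. The only difference is presentational --- you apply ``non-injective implies non-surjective'' to the single map $g_a$ for that fixed $a$, whereas the paper runs the same count globally over a $q\times q$ array indexed by all pairs $(a,b)$; both versions are sound.
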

\begin{proof} Consider a $q \times q$ array $C$ whose entries are sets, with $C_{a,b} = \{ x \in \F \st p(x) + ax +b = 0 \}$.  We will show that $C$ has some empty cells.
  Observe first that a given value of $x$ appears exactly once in the row for $a$, as $x \in C_{a,b}$ iff $b = -p(x) - ax$.
  So a given $x$ lies in exactly $q$ cells, and as there are $q$ available values of $x$ and $q^2$ cells, if all cells are inhabited, each cell can contain at most 1 element.
  Conversely, if any cell has more than one occupant, some cell must be empty.
  Pick any $x \not=y$
  and let $a =  -\frac {p(x) - p(y)} {x-y}$ and  $b = p(x) + ax = p(y) +ay$,
  so that $C_{a,-b}$ contains both $x$ and $y$.
\end{proof}

We will prove a slightly over-general result for any characteristic, requiring a further generalization of
the concept of orthogoval.
\begin{definition}
A pair of spaces, both projective or both affine, of the same order and on the same point set are \emph{$k$-orthogoval} if each line of one space intersects each line of the other space in at most $k$ points.
\end{definition}
For $k=2$ this is the same as orthogoval as defined above.

We can generalize the construction of~\cite[Section 3.1]{CIJSSS2022} with respect to both dimension and characteristic.
\begin{theorem} \label{thm:p-orth-char-p}
 For any prime $p$, integer $n \geq 1$,  and integer $k \geq 2$, there is a pair of $p$-orthogoval $\AG(k, \F_{p^n})$.
%affine spaces of dimension $k$ of order $p^n$.
\end{theorem}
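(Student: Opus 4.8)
The plan is to put both spaces on the common point set $\F_q^k$, where $q=p^n$: take $\pi_1=\AG(k,\F_q)$ in standard coordinates, and $\pi_2=\phi(\pi_1)$, the image of $\pi_1$ under a carefully chosen bijection $\phi\colon\F_q^k\to\F_q^k$; since $\phi$ is a bijection, $\pi_2$ is again a copy of $\AG(k,\F_q)$ on the same points. The first step is to reduce $p$-orthogonality to a property of $\phi$. A line of $\pi_1$ is an affine line $\ell=\{A+tB:t\in\F_q\}$ and a line of $\pi_2$ is $\phi(\ell')$ for an affine line $\ell'$, so $\pi_1$ and $\pi_2$ are $p$-orthogoval precisely when $\phi$ carries every affine line to a set meeting every affine line in at most $p$ points. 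I would get this from two requirements on $\phi$: that it be given coordinatewise by polynomials of total degree at most $p$, and that it map no affine line \emph{onto} an affine line. Granting these, fix lines $\ell=\{A+tB\}$ and $m=\{C+sD\}$ and a coordinate $j$ with $D_j\neq0$; the points of $\phi(\ell)\cap m$ correspond to the $t\in\F_q$ at which the $k-1$ polynomials $D_j\bigl(\phi_i(A+tB)-C_i\bigr)-D_i\bigl(\phi_j(A+tB)-C_j\bigr)$, $i\neq j$, all vanish. Each of these has degree at most $p$ in $t$, so either they all vanish identically — which means $\phi(\ell)\subseteq m$, hence $\phi(\ell)=m$ since both have size $q$, contradicting the second requirement — or there are at most $p$ common roots, and since $\phi$ is injective this bounds $|\phi(\ell)\cap m|$ by $p$.

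The second, harder step is to construct such a $\phi$ explicitly. The warning here is that the easy candidates do not work: a shear $(x_1,\dots,x_k)\mapsto(x_1,\dots,x_{k-1},x_k+h(x_1,\dots,x_{k-1}))$ is bijective and of controlled degree but fixes setwise the parallel class of lines in the $x_k$-direction, and more generally the additive maps coming from $x\mapsto x^p$ always stabilize a parallel class. So I would take $\phi$ to genuinely mix all $k$ coordinates, each being perturbed by a nonlinear polynomial — quadratic terms suffice, which is legitimate since $2\le p$ — in one or more of the other coordinates, arranged so that (i) after fixing one anchor coordinate the equations defining $\phi^{-1}$ become triangular, which yields bijectivity, and (ii) no coordinate direction escapes being bent. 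The constants appearing in $\phi$ are the free parameters, and this is where \cref{lemma:a-b-no-roots} enters: a collision $\phi(x)=\phi(x')$ with $x\neq x'$ forces an auxiliary polynomial (depending on those constants) to have a nonzero root in $\F_q$, and the lemma lets us adjust that polynomial by an affine term — that is, choose the linear and constant parts of the perturbations — so that it is root-free, making $\phi$ injective and hence bijective.

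With $\phi$ in hand the remaining check is that it maps no affine line onto an affine line: for $\ell=\{A+tB\}$ with $B\neq0$, the curve $t\mapsto\phi(A+tB)$ has some coordinate a genuinely quadratic function of $t$, so its image (which has $q$ points) is not contained in any line. Combined with the reduction above, this gives the $p$-orthogonality of $\pi_1$ and $\pi_2$.

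I expect the construction of $\phi$ to be the main obstacle. The three demands on it — bijective, coordinatewise of degree at most $p$, and mapping no line onto a line — are in tension, since the maps that are easiest to keep bijective and of low degree are exactly the ones that stabilize a parallel class; the case $p=2$ is the tightest, as only quadratic perturbations are then allowed. \cref{lemma:a-b-no-roots} handles the bijectivity half of this tension, the mixing of coordinates handles the ``no fixed direction'' half, and the delicate point is arranging both at once while keeping the degree along every line equal to $p$ rather than $p^2$.
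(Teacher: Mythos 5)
Your opening reduction is fine, and in fact it is the skeleton of the paper's argument in disguise: if every coordinate of $\phi$ restricts to a polynomial of degree at most $p$ on each line, then for lines $\ell,m$ either some nonzero polynomial of degree at most $p$ in $t$ controls $\phi(\ell)\cap m$, or $\phi(\ell)=m$. But the entire substance of the theorem is the construction of $\phi$, and that is precisely what you do not supply; both properties you defer are where the sketch fails. On bijectivity: for odd $p$ your quadratic perturbations are not additive, so a collision $\phi(x)=\phi(x')$ is a condition on the pair $(x,x')$ and does not collapse to a single univariate polynomial having a nonzero root; \cref{lemma:a-b-no-roots} only lets you destroy the roots of one fixed polynomial $p(x)$ by adding $ax+b$, and in your setting there is no single such polynomial to fix. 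The paper sidesteps this by making $\phi$ additive, taking
$f(x_1,\dotsc,x_k)=(x_1^p-x_2,\dotsc,x_{k-1}^p-x_k,\ x_k^p+Ax_2+Bx_1)$,
whose coordinates are $p$-th power (hence $\F_p$-linear) maps of degree $p$, not $2$: injectivity is then equivalent to trivial kernel, the kernel condition telescopes to $x_1^{p^{k}}+Ax_1^p+Bx_1=0$, and the substitution $y=x_1^{p-1}$ turns this into $y^{(p^k-1)/(p-1)}+Ay+B=0$, which is exactly what \cref{lemma:a-b-no-roots} is invoked to forbid. Additivity is also what lets the paper reduce the intersection analysis to lines through the origin.

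The second gap is the ``no line maps onto a line'' step. The inference ``some coordinate of $t\mapsto\phi(A+tB)$ is genuinely quadratic, hence the image is not contained in a line'' is false: in characteristic $2$ the bijection $(x_1,x_2)\mapsto(x_1^2,x_2)$ has a genuinely quadratic coordinate yet maps every line onto a line (it is a Frobenius collineation). The correct criterion is that the restrictions $\phi_1|_\ell,\dotsc,\phi_k|_\ell$ together with the constant functions span a space of dimension at least $3$ for every line $\ell$, and verifying this for all lines and all direction vectors is the real work of the theorem: in the paper it is the case analysis over $(a_1:\cdots:a_k)$ and $(b_1:\cdots:b_k)$, and it is where the choice of $A,B$ is used a second time (the case in which all coefficients vanish collapses to $b_2^{(p^k-1)/(p-1)}+Ab_2+B=0$, again impossible by \cref{lemma:a-b-no-roots}). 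Without an explicit map and this verification, what you have is a correct reformulation of what must be proved, not a proof; the tension you yourself identify at the end is resolved in the paper precisely by abandoning quadratic perturbations in favour of additive $p$-power coordinates.
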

\begin{proof} Fix $n \geq 1$, $k \geq 2$, and prime $p$.
  Let $\F = \F_{p^n}$, and pick $A, B \in \F$ so that $x^{(p^{k}-1)/(p-1)} + Ax + B$ has no root in $\F$; we know they exist by \cref{lemma:a-b-no-roots}.
  Then define $f: \AG(k, \F) \rightarrow \AG(k, \F)$ by the equation
  \[ f(x_1, \dotsc, x_k) = (x_1^p - x_2, \dotsc, x_{k-1}^p - x_k, \, x_k^p + Ax_2 + Bx_1). \]
  This $f$ is $\F_p$-multilinear, and so is a bijection iff 0 has a single preimage.  But if $f(x_1, \dotsc, x_k) = 0$ we have $x_i = x_1^{p^{i-1}}$ for $i = 1, \dotsc, k$ and the final component is 0 iff $x_1^{p^{k-1}} + Ax_1^p + Bx_1 = 0$.  This implies $x_1 = 0$ by the choice of $A$ and $B$ (otherwise $y=x_1^{p-1}$ is a root of $y^{(p^{k}-1)/(p-1)} + Ay + B$, which is impossible).

  One space is $\AG(k, \F)$ in its standard representation, and the second, on the same point set, has as its lines the sets $f(l) = \{ f(x_1, \dotsc, x_k) \st (x_1, \dotsc, x_k) \in l \}$ as $l$ runs over the lines of the first space.
  We now need to look at the intersections of lines from the two spaces.  As $f$ is $\F_p$-linear, we can look at just the lines through the origin, as any pair of lines can be translated so that they meet at the origin.
  For the standard plane, the set of lines through the origin is
  \[ L = \{ ~\{ (a_1v, a_2v, \dotsc, a_kv) \st v \in \F \} \st (a_1:a_2:\cdots:a_k) \in \PG(k-1, \F) \}. \]
  For the second plane we have
  \begin{align*} L' &= \{ \{ f(b_1w, \dotsc, b_kw) \st w \in \F \} \st (b_1: \cdots : b_k) \in \PG(k-1, \F) \} \\
    &= \{~\{ (b_1^pw^p - b_2w, \dotsc, b_{k-1}^pw^p - b_kw,\,  b_k^pw^p+(Ab_2 + Bb_1)w) \} \st \dots \}. % (b_1 : \cdots : b_k) \in \PG(k-1, \F) \}.
  \end{align*}
  Now fixing a particular $(a_1:\cdots:a_k)$ and $(b_1:\cdots:b_k)$, we will show that the corresponding lines meet in at most $p$ points.
  Where these lines meet we have
  \begin{align}
    a_i v &= b_i^p w^p - b_{i+1}w \qquad \text{for $1 \leq i \leq k-1$, and} \label{eq:a_i_1} \\
    a_k v &=  b_k^pw^p+(Ab_2 + Bb_1)w. \label{eq:a_i_2}
  \end{align}
  We distinguish two cases.

  {\bf Case 1: $a_1 \not= 0$}.  As the $a_i$ are chosen projectively we may assume $a_1 = 1$.  Then from \cref{eq:a_i_1} with $i=1$ we have
  $v = b_1^p w^p - b_2 w$; substituting this in the remaining equations gives
  \begin{align*}
    a_i ( b_1^p w^p - b_2 w) &= b_i^p w^p - b_{i+1}w \qquad \text{for $2 \leq i \leq k-1$, and} \\ %\label{eq:a_i_3} \\
    a_k ( b_1^p w^p - b_2 w) &=  b_k^pw^p+(Ab_2 + Bb_1)w. % \label{eq:a_i_4}
  \end{align*}
  Collecting terms and rearranging gives
  \begin{align*}
    (a_i b_1^p - b_i^p) w^p + (b_{i+1} - a_i b_2) w &= 0 \qquad \text{for $2 \leq i \leq k-1$, and} \\ % \label{eq:a_i_5} \\
    (a_k b_1^p - b_k^p) w^p - (a_k b_2 + Ab_2 + Bb_1) w &=  0. % \label{eq:a_i_6}
  \end{align*}
  If any of those equations considered as a polynomial in $w$  has a nonzero coefficient, there can be at most $p$ satisfying values of $w$ and so at most $p$ points of intersection.  So suppose the coefficients are all 0; this gives
  \begin{align}
    a_i b_1^p &= b_i^p & \text{for $2 \leq i \leq k$,} \label{eq:a_i_7}\\
    b_{i+1} &= a_i b_2 & \text{for $2 \leq i \leq k-1$, and} \label{eq:a_i_8}\\
    a_k b_2 + Ab_2 + Bb_1 &=  0. \label{eq:a_i_9}
  \end{align}
  From \cref{eq:a_i_7} we see that if $b_1 = 0$, then all the $b_i$ are 0, an impossibility.  So we may assume $b_1 = 1$, and \cref{eq:a_i_7} gives
  \begin{equation} \label{eq:a-from-b}
    a_i = b_i^p
  \end{equation}
  for all $i$, letting us eliminate the $a_i$.  Now \cref{eq:a_i_8} gives
  \[ b_{i+1} = b_i^p b_2, \]
so that induction gives, for $2 \leq i \leq k$,
\[ b_{i} = b_2^{1+p+\cdots+p^{i-2}} = b_2^{(p^{i-1}-1)/(p-1)}. \]
Substituting this expression for $b_k$ into \cref{eq:a-from-b} gives
\[ a_k = b_2^{p+p^2+\cdots+p^{k-1}}, \]
which we can substitute into
\cref{eq:a_i_9}, and recalling $b_1 = 1$, we get
\[ b_2^{(p^k-1)/(p-1)} + Ab_2 + B = 0. \]
which is impossible by the choice of $A$ and $B$.

{\bf Case 2: $a_1 = 0$.} Let $j$ be the lowest index so that $a_j \not=0$,
so that $2 \leq j \leq k$.  We may assume $a_j = 1$.
Then \cref{eq:a_i_1} for $1 \leq i < j$ is
\[ 0 = b_i^p w^p - b_{i+1}w, \]
giving at most $p$ choices for $w$ unless $b_i = b_{i+1} = 0$. So, assuming that for all $i < j$, we must have $j < k-1$, and \cref{eq:a_i_1} for $i=j$ with $b_j = 0$  gives
  \[ v = - b_{j+1}w. \]
  Substituting this into \cref{eq:a_i_1,eq:a_i_2} gives
    \begin{align*}
    - a_i b_{j+1} w &= b_i^p w^p - b_{i+1}w \qquad \text{for $j+1 \leq i \leq k-1$, and} \\
    - a_k  b_{j+1} w &=  b_k^pw^p+(Ab_2 + Bb_1)w.
    \end{align*}
    But then unless $b_i = 0$ for $j+1 \leq i \leq k$, there can be at most $p$ solutions for $w$.  As the $b_i$ cannot be all zero, this case is done.
\end{proof}

With $p=2$, this gives orthogoval affine spaces of any dimension larger than 1 over a field of even characteristic.
\begin{corollary} \label{thm:ag_k_2n}
For any $k \geq 2$ and $n \geq 1$ there is a pair of orthogoval $\AG(k, \F_{2^n})$.
\end{corollary}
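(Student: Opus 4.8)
The plan is to obtain this corollary as the immediate $p=2$ specialization of \cref{thm:p-orth-char-p}. Applying that theorem with the prime $p$ taken to be $2$, and with the given $k \geq 2$ and $n \geq 1$, yields a pair of $2$-orthogoval $\AG(k, \F_{2^n})$ on a common point set.

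The only remaining step is to observe that ``$2$-orthogoval'' coincides with ``orthogoval'' in the sense of \cref{main-def}: a pair of spaces is $2$-orthogoval exactly when each line of one space meets each line of the other in at most $2$ points, which is precisely the defining condition in \cref{main-def}. This equivalence was already noted immediately after the definition of $k$-orthogoval. Hence the pair produced above is a pair of orthogoval $\AG(k, \F_{2^n})$, as required.

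There is no real obstacle here; the content of the result is entirely contained in \cref{thm:p-orth-char-p}, and the corollary is just a restatement for the even-characteristic, $2$-orthogoval case. (One could alternatively remark that for $p = 2$ the map $f$ in the proof of \cref{thm:p-orth-char-p} is $\F_2$-linear and the case analysis there bounds every relevant line intersection by $p = 2$ points, so the two affine spaces it constructs are orthogoval in the strict sense without any further argument.)
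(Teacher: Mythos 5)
Your proposal is correct and matches the paper exactly: the corollary is stated there as the immediate $p=2$ specialization of \cref{thm:p-orth-char-p}, using the observation (made right after the definition of $k$-orthogoval) that $2$-orthogoval coincides with orthogoval in the sense of \cref{main-def}. Nothing further is needed.
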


For some fields we can do better than \cref{thm:p-orth-char-p} allows.  Over $\F_5$, for example, we can find a set of 24 mutually $3$-orthogoval affine planes (and do not know if this is the best possible), while the theorem promises only a pair of 5-orthogoval affine planes.

\begin{lemma} \label{thm:lo-add-dim}
  Let $\F$ be a field.  Suppose there exists a set of $k$ mutually orthogoval $\AG(m, \F)$ and a set of $k$ mutually orthogoval $\AG(n, \F)$.  Then there exists a set of $k$ mutually orthogoval $\AG(m+n, \F)$.
\end{lemma}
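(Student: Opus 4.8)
The natural idea is to build $\AG(m+n,\F)$ as a "product" of an $\AG(m,\F)$ and an $\AG(n,\F)$: write the point set as $\F^{m+n} = \F^m \times \F^n$, and given representations of the two factor spaces, define a representation of the $(m+n)$-dimensional space whose lines are controlled coordinate-wise. The key structural fact I would use is that in $\AG(m+n,\F)$ a line is an affine image of $\F$, i.e. a set $\{P + tv : t \in \F\}$; projecting onto the first $m$ coordinates and onto the last $n$ coordinates sends such a line either to a point or to a line in the respective smaller affine space. So if I index the $i$-th copies of the $m$- and $n$-dimensional spaces by bijections $\phi_i : \F^m \to \F^m$ and $\psi_i : \F^n \to \F^n$ (where $\phi_1 = \mathrm{id}$, $\psi_1 = \mathrm{id}$ give the standard copies), I can define the $i$-th copy of $\AG(m+n,\F)$ via the bijection $\Phi_i = \phi_i \times \psi_i : \F^{m+n} \to \F^{m+n}$.

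Then I would check orthogovality of $\Phi_i$-space and $\Phi_j$-space for $i \neq j$. A line $L$ of the $\Phi_i$-space is $\Phi_i(\ell)$ for a standard line $\ell$; a line $L'$ of the $\Phi_j$-space is $\Phi_j(\ell')$. Because $\Phi_i, \Phi_j$ act independently on the two coordinate blocks, a point of $L \cap L'$ corresponds to a point lying simultaneously in the first-block projections and in the second-block projections. The first-block projection of $L$ is $\phi_i$ of (the projection of $\ell$), which is either a point or a line of the $i$-th copy of $\AG(m,\F)$; similarly the first-block projection of $L'$ is a point or a line of the $j$-th copy of $\AG(m,\F)$. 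If both are lines, mutual orthogovality of the $m$-dimensional spaces bounds the first-block positions of intersection points by $2$; symmetrically for the $n$-block. The one case to dispatch carefully is when a projection degenerates to a single point — but then the line is "vertical" in that block, and all its intersection points with $L'$ share that block-coordinate, so the count in that block is trivially at most $1$ (or the intersection is empty). Combining: any point of $L \cap L'$ is determined by a choice among at most two first-block positions and at most two second-block positions, but in fact, since both $L$ and $L'$ are parametrized by a single variable $t \in \F$, an intersection point is determined by its $t$-value, and the constraint that the first-block projections meet already pins $t$ down to at most two values — hence $|L \cap L'| \le 2$.

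The step I expect to require the most care is making the "both projections are lines / one projection is a point" case analysis airtight, and in particular handling the subtlety that a line of the product space, when one factor projection collapses to a point, still must be checked against the other factor's orthogovality. I would phrase this cleanly by noting that for a standard line $\ell = \{P + tv\}$ in $\F^{m+n}$, writing $v = (v', v'')$ with $v' \in \F^m$, $v'' \in \F^n$, at least one of $v', v''$ is nonzero; if $v' \neq 0$ the first-block projection of $\Phi_i(\ell)$ is a genuine line of the $i$-th $\AG(m,\F)$, and the orthogovality of the $m$-dimensional family finishes the argument via that block alone; if $v' = 0$ then necessarily $v'' \neq 0$ and we argue in the second block. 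Either way a single block already delivers the bound, so the product is orthogoval. Finally I would remark $\Phi_1 = \mathrm{id}$ gives the standard copy, all $k$ copies lie on the common point set $\F^{m+n}$, and distinctness of the $\phi_i$ (or $\psi_i$) ensures the copies are pairwise distinct, completing the proof.
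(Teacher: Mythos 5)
Your proposal is correct and follows essentially the same route as the paper: represent the points as $\F^m \times \F^n$, build each copy of $\AG(m+n,\F)$ from a coordinate-wise product of the bijections defining the two given families, and bound line intersections by projecting onto whichever block has a nonzero direction vector, where injectivity of that projection on the line plus orthogovality (or the singleton case) gives the bound of two. The only difference is cosmetic: you carry general $k$ explicitly via indexed bijections, while the paper writes out $k=2$ and notes the generalization.
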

\begin{proof}
  For simplicity we show the proof for $k=2$, which can be seen to generalize as needed.
  Represent the points of $\AG(m, \F)$ in the usual way by $\F^m$ and those of $\AG(n, \F)$ by $\F^n$.
  Let $L$ be the set of lines in $\AG(m, \F)$ and $L'$ the set of lines in  $\AG(n, \F)$.  By assumption, we have bijections
  $f: \F^m \rightarrow  \F^m$ and $f': \F^n \rightarrow  \F^n$
  given by arbitrary isomorphisms between the spaces,
  so that
  \begin{align*}
    | f(l_1) \cap l_2 | \leq 2 \qquad & \text{for $l_1, l_2 \in L$, and} \\
    | f'(l_1) \cap l_2 | \leq 2 \qquad & \text{for $l_1, l_2 \in L'$.}
  \end{align*}
  We can represent $\AG(m+n, \F)$ with its points as the set $\F^m \times \F^n$.  Let $\pi_1$ and $\pi_2$ be the projection functions from $\F^m \times \F^n$ to $\F^m$ and to $\F^n$ respectively, and let $L''$ be the set of lines. Any line $l \in L''$ has the form
  \[ l = \{ (s,t) + k(u,v) \st k \in \F \} \]
  where $u \in \F^m$ and $v \in \F^n$ are not both 0, so we have either
  \begin{itemize}
  \item $v = 0$, so that $\pi_1(l) \in L$ and $\pi_2(l)$ is a singleton,
  \item $u = 0$, so that $\pi_1(l)$ is a singleton and $\pi_2(l) \in L'$, or
  \item $\pi_1(l) \in L$ and $\pi_2(l) \in L'$.
  \end{itemize}
    When
  $\pi_1(l) \in L$, then $\pi_1$ is one-to-one on $l$, and similarly when $\pi_2(l) \in L'$, then $\pi_2$ is one-to-one on $l$.

  Define $g: \F^m \times \F^n \rightarrow \F^m \times \F^n$ by the equation
  $g(x,y) = (f(x), f'(y))$.  This $g$ is a bijection. Now let $l_1, l_2 \in L''$; we must show that
  \[ | g(l_1) \cap l_2 | \leq 2.\]
  Suppose first that $\pi_1(l_1) \in L$. Then
  \[ \pi_1(g(l_1)) = f(\pi_1(l_1)) \]
  and as $\pi_1(l_2)$ is either an element of $L$ or a singleton,
  $\pi_1(g(l_1))$ and $\pi_1(l_2)$ meet in at most two points.
  The other case, $\pi_2(l_1) \in L'$, is similar.
\end{proof}

We can thus use \cite[Corollary 3.16]{CIJSSS2022} as a base case to get a set of 3 mutually orthogoval spaces of even dimension over some fields of characteristic 2
\begin{corollary} \label{thm:3-lo-ag2k-f2n}
  If $n$ is relatively prime to 6, there exists a set of 3 mutually orthogoval $\AG(2k, \F_{2^n})$.
\end{corollary}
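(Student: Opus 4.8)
The plan is a straightforward induction on $k$, assembling higher-dimensional examples out of the two-dimensional building block by repeated application of \cref{thm:lo-add-dim}. Throughout, fix the field $\F = \F_{2^n}$ with $\gcd(n,6)=1$.

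For the base case $k=1$ I would simply invoke \cite[Corollary 3.16]{CIJSSS2022}, which says precisely that under the hypothesis $\gcd(n,6)=1$ there is a set of three mutually orthogoval $\AG(2,\F_{2^n})$. For the inductive step, suppose a set of three mutually orthogoval $\AG(2k,\F)$ has been constructed. Then \cref{thm:lo-add-dim}, applied with set size $3$ and with the two dimension parameters taken to be $2k$ and $2$ (using the base case to supply the three mutually orthogoval $\AG(2,\F)$ needed for the second factor), yields a set of three mutually orthogoval $\AG(2k+2,\F)$. Iterating this from $k=1$ gives the result for every $k\geq 1$.

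There is no genuine obstacle here; the one point to watch is purely notational, namely that the dimension variable written $n$ in the statement of \cref{thm:lo-add-dim} clashes with the field exponent $n$ appearing in the corollary, so one should apply the lemma with $m=2k$ and its ``$n$'' equal to $2$, or rename it. All the substantive work — verifying that the product map sends lines of one product space to caps of the other — is already discharged inside \cref{thm:lo-add-dim}, and the arithmetic hypothesis $\gcd(n,6)=1$ enters exactly once, to guarantee the two-dimensional base case via \cite[Corollary 3.16]{CIJSSS2022}.
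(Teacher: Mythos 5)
Your argument is correct and is essentially the paper's own: the corollary is obtained exactly by combining the base case of three mutually orthogoval $\AG(2,\F_{2^n})$ from \cite[Corollary 3.16]{CIJSSS2022} with repeated application of \cref{thm:lo-add-dim} to add dimensions two at a time. The paper leaves this iteration implicit, so your explicit induction (and the remark about the clash of the letter $n$) is just a more detailed write-up of the same proof.
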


We can also use the sets of 7 mutually orthogoval $\AG(2,\F_4)$ and 7 mutually orthogoval $\AG(2,\F_8)$ from \cite[Theorem 3.19]{CIJSSS2022}) as a base case to get sets of any even dimension over these fields:
\begin{corollary} \label{thm:seven-f4-f8}
  For any $k \geq 1$ there exists a set of  seven  mutually orthogoval $\AG(2k, \F_4)$, and a set of
  seven  mutually orthogoval $\AG(2k, \F_8)$.
\end{corollary}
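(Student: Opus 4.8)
The plan is a straightforward induction on $k$, using \cref{thm:lo-add-dim} with the planar results of \cite[Theorem 3.19]{CIJSSS2022} as the base case. For concreteness take $\F = \F_4$; the argument for $\F_8$ is identical, with the corresponding base case. The base case $k=1$ is precisely \cite[Theorem 3.19]{CIJSSS2022}, which supplies a set of seven mutually orthogoval $\AG(2, \F_4)$.

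For the inductive step, suppose we already have a set of seven mutually orthogoval $\AG(2(k-1), \F_4)$. Applying \cref{thm:lo-add-dim} with $m = 2(k-1)$, $n = 2$, and the count $7$ in place of $k$ there, and feeding in the seven mutually orthogoval $\AG(2(k-1), \F_4)$ together with a second copy of the seven mutually orthogoval $\AG(2, \F_4)$, we obtain a set of seven mutually orthogoval $\AG(2(k-1)+2, \F_4) = \AG(2k, \F_4)$. This completes the induction. The same chain, started from the seven mutually orthogoval $\AG(2, \F_8)$ of \cite[Theorem 3.19]{CIJSSS2022}, yields seven mutually orthogoval $\AG(2k, \F_8)$ for every $k \geq 1$.

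The only point that needs a word of care is that \cref{thm:lo-add-dim} is stated for a general cardinality of the orthogoval set but proved in the text only for pairs, with the remark that the argument ``generalizes as needed.'' I do not expect this to be a genuine obstacle: the construction there is the product map $g(x,y) = (f(x), f'(y))$ applied simultaneously with all seven isomorphism pairs $(f_i, f'_i)$, and the projection argument showing $|g_i(l_1) \cap g_j(l_2)| \le 2$ for any two distinct indices $i \ne j$ goes through verbatim, since it only ever uses one projection at a time and reduces to the hypothesis in the lower-dimensional spaces. So the induction is legitimate, and no new computation beyond invoking \cref{thm:lo-add-dim} is required.
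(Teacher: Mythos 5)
Your proposal is correct and matches the paper's own (implicit) argument exactly: the corollary is obtained by iterating \cref{thm:lo-add-dim} starting from the seven mutually orthogoval planes of \cite[Theorem 3.19]{CIJSSS2022}, adding dimension two at each step. Your remark about the lemma being proved only for pairs is handled the same way in the paper, which simply notes the argument ``generalizes as needed.''
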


The following was found with computer search.\footnote{This search was carried out by coding a recognizer for a solution as a functional program  in Cryptol (\url{www.cryptol.net}), which calls out to the SMT solver Z3~\cite{de2008z3} with a Boolean satisfiability problem whose solution is a value for which the program returns true.  The solution was then checked independently, in SageMath~\cite{sagemath}.}
\begin{theorem}
  There exists a set of eight mutually orthogoval $\AG(3, \F_3)$.
\end{theorem}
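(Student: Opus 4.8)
The key observation is that in $\AG(3,\F_3)$ every line has exactly three points, so two lines---whether from the same space or from different spaces---intersect in at most two points unless they coincide as $3$-element point sets. Consequently, a pair of copies of $\AG(3,\F_3)$ built on a common $27$-point set is orthogoval in the sense of \cref{main-def} precisely when their two line sets are disjoint as families of $3$-subsets. So the task is to exhibit eight $27$-point line systems, each an $\AG(3,\F_3)$, that are pairwise disjoint. I would fix the first system to be the standard $\AG(3,\F_3)$ on $\F_3^3$, with line set $\mathcal L$ (here $|\mathcal L| = 117$), and represent the remaining seven as $\sigma_2(\mathcal L), \dots, \sigma_8(\mathcal L)$ for permutations $\sigma_2, \dots, \sigma_8$ of $\F_3^3$ (writing $\sigma_1 = \mathrm{id}$). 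Each $\sigma_i(\mathcal L)$ is automatically the line set of a copy of $\AG(3,\F_3)$---the image of the standard one under the bijection $\sigma_i$---so the only substantive constraint is that $\sigma_1(\mathcal L), \dots, \sigma_8(\mathcal L)$ be pairwise disjoint; equivalently, for every $1 \le i < j \le 8$ and every line $\ell \in \mathcal L$, the triple $\sigma_j^{-1}\sigma_i(\ell)$ is not a line of $\mathcal L$. This would strengthen the bare pair of $3$-orthogoval spaces furnished by \cref{thm:p-orth-char-p}.

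For the search itself, I would encode each unknown permutation $\sigma_i$ ($2 \le i \le 8$) by a $27 \times 27$ array of Boolean variables $x^{(i)}_{p,q}$, with $x^{(i)}_{p,q}$ true meaning $\sigma_i(p) = q$, together with the usual clauses forcing exactly one true entry in each row and column. For each ordered pair $(i,j)$ with $i<j$ and each pair of lines $\ell,\ell'\in\mathcal L$, add the assertion $|\sigma_i(\ell)\cap\sigma_j(\ell')|\le 2$; since $|\ell|=|\ell'|=3$, this simply says $\sigma_i(\ell)\ne\sigma_j(\ell')$, a short Boolean condition in the $x$-variables. Handing this instance to an SMT/SAT solver (Z3) and reading off a model yields the configuration. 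To make the instance tractable I would break symmetry using $\mathrm{Aut}(\AG(3,\F_3)) = \mathrm{AGL}(3,\F_3)$, which acts on the solution set while fixing $\mathcal L$: one may normalise $\sigma_2$ to a fixed transversal of a suitable subgroup and impose an ordering on the labels $2,\dots,8$. Solving incrementally---first a pair, then a triple, and so on up to eight---both cuts solver effort and locates the largest attainable set.

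The main obstacle is simply the size of the search: a naive description has $(27!)^{7}$ candidate tuples, so whether an eight-term solution exists, and whether a solver can exhibit one, is not clear in advance. A crude count is comfortably satisfied---the eight systems use $8 \times 117 = 936$ of the $\binom{27}{3} = 2925$ triples, and a large set of Steiner triple systems on $27$ points has $25$ pairwise disjoint members, so eight is plausible---but the extra demand that each system be of affine type is a genuine restriction, and only the search settles it. Once the solver returns candidate permutations $\sigma_2,\dots,\sigma_8$, verification is routine and best done in an independent system (SageMath): form the eight line sets (each automatically an $\AG(3,\F_3)$, being $\sigma_i(\mathcal L)$) and confirm that all $\binom{8}{2}$ pairs of $117$-element triple families are disjoint, i.e.\ that no line of one space meets a line of another in three points.
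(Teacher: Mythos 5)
Your reduction is correct and worth stating: for $q=3$ every line has exactly three points, so two copies of $\AG(3,\F_3)$ on the same $27$ points are orthogoval precisely when their $117$-element line families are disjoint as sets of triples, and the counting/plausibility remarks are fine as heuristics. The methodology you outline (encode candidate bijections, hand the disjointness constraints to a SAT/SMT solver such as Z3, verify independently in SageMath) is in fact exactly how the paper's result was obtained, as its footnote records.

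The genuine gap is that you never produce the object whose existence the theorem asserts. Your text is a search plan together with an honest admission that ``whether an eight-term solution exists, and whether a solver can exhibit one, is not clear in advance'' and that ``only the search settles it.'' For an existence statement proved by explicit example, that settles nothing: without a witness (or some independent structural argument), there is no proof. The paper closes exactly this gap by exhibiting a concrete witness in a much smaller search space than the one you set up: instead of seven unrelated permutations $\sigma_2,\dotsc,\sigma_8$ (a space of size roughly $(27!)^7$ even before symmetry breaking), it takes a single permutation $\sigma$ of the $27$ points of order $8$, given explicitly by its cycle decomposition, and defines $A_i = \sigma^i(A_0)$ for $0 \le i \le 7$. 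With this cyclic structure, mutual orthogovality of all $\binom{8}{2}$ pairs reduces to checking that $\sigma^d(\mathcal{L})$ is disjoint from $\mathcal{L}$ for $d = 1,\dotsc,7$, a finite verification that the paper reports was done in SageMath. If you want to repair your argument, you must either run your proposed search to completion and record the resulting permutations (at which point the proof is the witness plus the routine disjointness check), or restrict to a one-generator ansatz like the paper's and exhibit the generator; as written, the proposal establishes only that a solution is plausible, not that it exists.
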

\begin{proof}
    We label the points of $\F_3^3$ with integers in the range 0 to 26, with point $(a,b,c)$ labelled as $9a+3b+c$.  The additive group on this set $\F_3^3$ adds pointwise, and the corresponding operation is used on the integer representation; thus the sum of 13 and 5 is 15, as $(1,1,1) + (0,1,2) = (1,2,0)$.
  Let $A_0$ be the usual affine space on this set, with lines the sets $\{P, Q, R\}$ with $P \not= Q$ and $P+Q+R=0$.  (This is the same as sets $\{P, P+S, P+2S \}$ for nonzero $S$)
  Let $\sigma$ be the permutation of $\{0, 1, \dotsc, 26 \}$ with cycle composition
  \[ (2, 24, 23, 10, 25, 7, 3, 5)(4, 6, 11, 15, 13, 12, 9, 19)(8, 21, 22, 16, 17, 18, 20, 26), \]
  thus with fixed points 0, 1, and 14.
  We let $A_i$ be the affine space whose lines are the images of the lines of $A_0$ through the function $\sigma^i$.
  For example, $\{0,1,2\}$ and $\{5, 13, 21\}$ are lines of $A_0$; the corresponding lines of $A_1$ are $\{0, 1, 24\}$ and $\{2, 12, 22\}$, and in $A_2$ we have $\{0, 1, 23\}$ and $\{24, 9, 16\}$.
  The spaces $\{A_0, A_1, \dotsc, A_7\}$ are mutually orthogoval.
\end{proof}

Together with the known set of seven mutually orthogoval $\AG(2, \F_3)$ and with Theorem~\ref{thm:lo-add-dim}, we obtain
\begin{corollary} \label{thm:lo-ag3-f3}
  For any $r \geq 2$, there exists a set of seven mutually orthogoval $\AG(r, \F_3)$.
\end{corollary}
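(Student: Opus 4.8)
The plan is a straightforward induction on $r$, with \cref{thm:lo-add-dim} supplying the inductive step and the two known small cases supplying the base. First I would record the base cases. The set of seven mutually orthogoval $\AG(2,\F_3)$ comes from \cite[Theorem 3.1]{CIJSSS2022}, and a set of seven mutually orthogoval $\AG(3,\F_3)$ is obtained by discarding any one member of the set of eight mutually orthogoval $\AG(3,\F_3)$ constructed just above (any subfamily of a mutually orthogoval family is again mutually orthogoval). So the statement holds for $r \in \{2,3\}$.

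For the inductive step, fix $r \geq 4$ and assume the result for every dimension in $\{2,\dots,r-1\}$. Since $r-2 \geq 2$, the inductive hypothesis gives a set of seven mutually orthogoval $\AG(r-2,\F_3)$; applying \cref{thm:lo-add-dim} with $k=7$, $m=2$, $n=r-2$ to this set and the seven mutually orthogoval $\AG(2,\F_3)$ yields seven mutually orthogoval $\AG(r,\F_3)$. This completes the induction. (Equivalently and more transparently: every integer $r \geq 2$ can be written as $2a+3b$ with $a,b$ nonnegative integers, so one can build the seven mutually orthogoval $\AG(r,\F_3)$ by applying \cref{thm:lo-add-dim} repeatedly, $a$ times with an $\AG(2,\F_3)$ summand and $b$ times with an $\AG(3,\F_3)$ summand.)

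There is no genuine obstacle here; the only points worth a moment's attention are that the base cases are truly available and that \cref{thm:lo-add-dim} is invoked with the right parameters. The one place where care is nominally needed is the claim in the proof of \cref{thm:lo-add-dim} that the $k=2$ argument "generalizes as needed'' to larger $k$: for $k=7$ this just means running the same projection argument on each of the $\binom{7}{2}$ pairs of spaces, so nothing new arises.
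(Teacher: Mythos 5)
Your proof is correct and follows essentially the same route as the paper: take the seven mutually orthogoval $\AG(2,\F_3)$ from \cite[Theorem 3.1]{CIJSSS2022} and seven of the eight mutually orthogoval $\AG(3,\F_3)$ as base cases, then combine dimensions with \cref{thm:lo-add-dim} to reach every $r \geq 2$. The paper states this in one line; your induction (equivalently the $2a+3b$ decomposition) is just the explicit version of the same argument.
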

% I suspect that we can find larger sets of these spaces.

\section{Projective spaces} \label{sec:lo-projective}

Many authors, for example ~\cite{jungnickel1984,MR3248524,sherk_1986}, have noted the use of inversion (or negation in the case of difference sets) to find orthogoval planes (although not using that terminology).
We can generalize this method
to construct orthogoval projective spaces of any even dimension over $\F_q$ for any prime power $q$, and can find other larger sets of orthogoval spaces in many other cases.

\begin{definition}
  A function $f: \PG(d, \F_q) \rightarrow \PG(d, \F_q)$ mapping the points of the projective space to itself is an \emph{orthomorphism} if it is a bijection and the image of any line is a cap.
\end{definition}
Recall that a \emph{cap} in a finite geometry is a set of points such that no three are on a line.  In the case of affine planes, such functions were called \emph{affine ovalinear} in~\cite{CIJSSS2022}.

For any bijection $f: \PG(d, \F_q) \rightarrow \PG(d, \F_q)$  mapping the points of the projective space to itself we can define a space $f(\PG(d, \F_q))$ isomorphic to the standard $\PG(d, \F_q)$, whose lines are the $f$-images of the lines on the standard plane.  Then we have the basic properties
\begin{lemma} \label{lemma:orthomorphism-basic}
  Let $S = \PG(r, \F_q)$ and suppose that $f$ is a bijection on the points of $S$.
  \begin{enumerate}
  \item $f$ is an orthomorphism iff $f(S)$ is orthogoval to $S$.
  \item If $g$ is a bijection then $f(S)$ is orthogoval to $S$ iff
    $(g \circ f)(S)$ is orthogoval to $g(S)$.
  \item $f$ is an orthomorphism iff its inverse is.
  \item if $f^i$ is an orthomorphism for $1 \leq i \leq n$ then the $n+1$ spaces
    $\{ f^i(S) \st 0 \leq i \leq n \}$ are mutually orthogoval.
  \end{enumerate}
\end{lemma}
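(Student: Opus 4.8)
The plan is to derive all four parts directly from the definitions, with parts 2--4 building on part 1 together with the single observation that applying a bijection to two point sets does not change the size of their intersection. There is no deep obstacle; the work is entirely bookkeeping, so I will be careful mainly about the symmetry of the word ``orthogoval'' and about the convention that the identity map returns $S$ itself.

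For part 1, I would simply unwind the definitions. A set of points is a cap in $S$ exactly when it meets every line of $S$ in at most two points, so $f$ is an orthomorphism exactly when $|f(\ell) \cap m| \le 2$ for every pair of lines $\ell, m$ of $S$. On the other hand, the lines of $f(S)$ are by definition the sets $f(\ell)$ with $\ell$ a line of $S$, so $f(S)$ is orthogoval to $S$ exactly when $|f(\ell)\cap m|\le 2$ for every such $\ell$ and $m$. These two conditions are verbatim the same, which is part 1.

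For part 2, the key point is that since $g$ is a bijection on the point set, $|g(X)\cap g(Y)| = |g(X\cap Y)| = |X\cap Y|$ for all point sets $X, Y$. The lines of $(g\circ f)(S)$ are the sets $g(f(\ell))$ and the lines of $g(S)$ are the sets $g(m)$, as $\ell, m$ range over the lines of $S$; taking $X = f(\ell)$ and $Y = m$ shows that $|g(f(\ell)) \cap g(m)| \le 2$ holds for all $\ell, m$ if and only if $|f(\ell)\cap m|\le 2$ holds for all $\ell, m$, giving part 2. For part 3, first note that ``orthogoval'' is a symmetric relation: the defining condition is unchanged if the two spaces are swapped. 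Applying part 2 with $g = f^{-1}$, and using that the identity map sends $S$ to $S$, gives that $f(S)$ is orthogoval to $S$ if and only if $S$ is orthogoval to $f^{-1}(S)$; combining this with symmetry and with part 1 applied to both $f$ and $f^{-1}$ yields that $f$ is an orthomorphism if and only if $f^{-1}$ is. For part 4, it suffices to show $f^i(S)$ is orthogoval to $f^j(S)$ whenever $0 \le i < j \le n$, where $f^0$ is the identity so that $f^0(S) = S$. Since $1 \le j - i \le n$, the map $f^{j-i}$ is an orthomorphism by hypothesis, hence $f^{j-i}(S)$ is orthogoval to $S$ by part 1; applying part 2 with $g = f^i$ and with $f^{j-i}$ in the role of $f$, and noting $f^i \circ f^{j-i} = f^j$, we get that $f^j(S)$ is orthogoval to $f^i(S)$.

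The only step requiring any thought, rather than pure rewriting, is recognizing that ``orthogoval'' is symmetric in its two arguments and that the identity map and the composite $f^{-1}\circ f$ legitimately instantiate the ``$h(S)$'' notation used in parts 1 and 2; once that is granted, parts 3 and 4 are immediate consequences of parts 1 and 2.
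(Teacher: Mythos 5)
Your proof is correct; the paper states this lemma without proof, treating it as routine, and your argument (unwinding the definitions for part 1, using that a bijection preserves intersection sizes for part 2, and deducing parts 3 and 4 from these together with the symmetry of ``orthogoval'') is exactly the bookkeeping the author evidently had in mind.
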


\begin{definition} \label{def:Phi}
  Let $\F = \F_{q^{r}}$, let $S = \PG(r-1,\F_q)$, and let $z \in \F$ be some primitive element.
  Label the points of $S$ by nonzero elements of $\F$ as follows:
  any point $(x_1:x_2:\cdots:x_{r}) \in S$ is labelled with $\sum x_i z^i$.
  Every point therefore has $q-1$ different labels, each an $\F_q$-multiple of the other, and
  every nonzero element of $\F$ is a label of some point.
  For any integer $i$ relatively prime to $q^r-1$,
  the mapping $x \mapsto x^i$ on labels induces a function taking points of $S$ to points of $S$.  We call that mapping $\Phi_i$.
\end{definition}
  Under the conditions of the definition, $\Phi_i$ can be seen to induce a bijection on the points of $S$: we have some $k$ with $x^{ik} = x$ for all $x \in \F^*$ as $i$ is relatively prime to $q^r-1$.  So if $x^i$ and $y^i$ label the same point, then
  $x^i = a y^i$ for some $a \in \F_q^\ast$; thus $x = a^k y$, and the labels $x$ and $y$ are for the same point.

  Note that we have $\Phi_{ij} = \Phi_i \circ \Phi_j$ for any $i, j$.

\begin{theorem} \label{thm:lo-pg2k-fq}
For any $r \geq 1$ and prime power $q$, there is a pair of orthogoval $\PG(2r, \F_q)$.
\end{theorem}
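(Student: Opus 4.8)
The strategy is to apply the orthomorphism machinery of \cref{lemma:orthomorphism-basic} together with the labelling of \cref{def:Phi}, taking $r \mapsto 2r+1$ so that $S = \PG(2r, \F_q)$ is labelled by the nonzero elements of $\F = \F_{q^{2r+1}}$. It suffices to exhibit a single exponent $i$, coprime to $q^{2r+1}-1$, such that $\Phi_i$ is an orthomorphism of $S$; then $\{S, \Phi_i(S)\}$ is the desired orthogoval pair by part~1 of the lemma. The natural candidate, mimicking the planar inversion trick, is $i = q^{2r+1}-2$, i.e.\ the map $x \mapsto x^{-1}$ on labels, which is certainly a bijection since $\gcd(-1, q^{2r+1}-1)=1$; this is $\Phi_{-1}$.

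**The key step: showing $\Phi_{-1}$ is an orthomorphism.** Unwinding \cref{def:Phi}, a line of $S$ through two points with labels $x, y$ consists of the points labelled $\{\alpha x + \beta y : (\alpha:\beta)\in\PG(1,\F_q)\}$ — the $\F_q$-span of $x$ and $y$, minus zero, read as projective points. So I must show: for any three pairwise $\F_q$-independent (as projective points) labels $x_1, x_2, x_3$ lying on a common line of $S$, the images $x_1^{-1}, x_2^{-1}, x_3^{-1}$ are \emph{not} collinear in $S$. Collinearity of the images means $x_1^{-1}, x_2^{-1}, x_3^{-1}$ span a $2$-dimensional $\F_q$-subspace of $\F$; equivalently there exist $a,b,c \in \F_q$, not all zero, with $a x_1^{-1} + b x_2^{-1} + c x_3^{-1} = 0$. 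Multiplying through by $x_1 x_2 x_3$, this becomes $a x_2 x_3 + b x_1 x_3 + c x_1 x_2 = 0$. On the other hand, the original collinearity gives $x_3 = s x_1 + t x_2$ for some $s, t \in \F_q^\ast$ (both nonzero, since the three points are distinct). Substituting and expanding, the relation $a x_2 x_3 + b x_1 x_3 + c x_1 x_2 = 0$ turns into a relation $P x_1^2 + Q x_1 x_2 + R x_2^2 = 0$ with $P = bs$, $Q = as + bt + c$, $R = at$ coefficients in $\F_q$. Dividing by $x_2^2$ (possible since $x_2\neq 0$) shows that $x_1/x_2$ is a root of the quadratic $P u^2 + Q u + R \in \F_q[u]$. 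Now $x_1/x_2 \notin \F_q$ — otherwise $x_1$ and $x_2$ would be $\F_q$-dependent, contradicting that they are distinct points of $S$. But $x_1/x_2 \in \F_{q^{2r+1}}$, a field of \emph{odd} degree over $\F_q$, so it has no conjugate of degree $2$; hence an element of $\F_{q^{2r+1}}\setminus\F_q$ cannot satisfy a nonzero quadratic over $\F_q$. Therefore $P=Q=R=0$, i.e.\ $bs = 0$, $at = 0$, $as+bt+c=0$; since $s,t\neq 0$ this forces $a=b=0$ and then $c=0$, contradicting that $(a,b,c)$ was nontrivial. Hence the images are never collinear, so $\Phi_{-1}$ is an orthomorphism.

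**Conclusion.** By part~1 of \cref{lemma:orthomorphism-basic}, $S$ and $\Phi_{-1}(S)$ are orthogoval, giving a pair of orthogoval $\PG(2r, \F_q)$; and since $\Phi_{-1}$ is its own inverse, this is consistent with part~3.

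**Anticipated obstacle.** I expect the only real content is the algebraic manipulation that reduces "images collinear" to "$x_1/x_2$ is a root of a quadratic over $\F_q$," and pinning down that the relevant coefficients genuinely vanish (handling the edge cases $s=0$ or $t=0$, which correspond to two of the points coinciding and are excluded). The crucial structural input — that $2r+1$ is odd, so $\F_{q^{2r+1}}$ contains no quadratic subextension of $\F_q$ — is exactly why the construction is limited to \emph{even} dimension; this parity constraint is the heart of the matter and should be stated explicitly. A minor point to check carefully is the translation between "line of the labelled space" and "$\F_q$-span of labels," including the case where a putative dependence among the $x_j^{-1}$ could a priori involve a zero coefficient, which the argument above already absorbs.
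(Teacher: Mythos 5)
Your proof is correct and follows essentially the same route as the paper: label the points of $\PG(2r,\F_q)$ by nonzero elements of $\F_{q^{2r+1}}$, take the inversion map $\Phi_{-1}$, and reduce the supposed collinearity of the inverses of three collinear points to a quadratic over $\F_q$ satisfied by a ratio of labels, which the odd degree of the extension rules out. The only cosmetic difference is that the paper normalizes the coefficients so the quadratic is visibly nonzero and concludes the ratio lies in $\F_q$ (contradicting distinctness), whereas you allow the quadratic to vanish identically and then force $a=b=c=0$; the substance is identical.
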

\begin{proof}
  We form a second space on the points of $S = \PG(2r,\F_q)$ via the bijection $\Phi_{-1}$: let $S' = \Phi_{-1}(S)$.
  We will show that $S$ and $S'$ are orthogoval.
  To that end, let $l$, $m$, and $n$ label three colinear points of $S$.  Then there exists $a, b, c \in \F_q$ with
  $al + bm + cn = 0$.  These three points are colinear in $S'$ iff  we have some $d, e, f\in \F_q$ with
  $dl^{-1} + em^{-1} + fn^{-1} = 0$.  As $l$, $m$, and $n$  are the labels of distinct points, none of $a$, $b$, of $c$ is zero, and we may assume $a=1$; similarly we may assume $d=1$.  Then
  \[ 1 = l l^{-1} = ( bm + cn ) (em^{-1} + fn^{-1}) = (be+cf) + bfmn^{-1} + cem^{-1}n. \]
  Let $y = m^{-1}n$, collect terms, and multiply by $y$ to get
  \[ cey^2 + (be+cf-1)y + bf = 0. \]
  However, $ce$ is not 0, so $y \in \F$ satisfies a polynomial in $\F_q[x]$ of degree 2; therefore $y \in \F_{q} \cup \F_{q^2}$.
  As the  degree of $\F$ is odd, we must have $y \in \F_q$.  But then $m$ and $n$ are labels for the same point, contrary to assumption.
\end{proof}

Using a similar idea we can find other mappings that give pairs, and sometimes larger sets, of orthogoval spaces.

\begin{theorem} \label{thm:more-orthomorphisms}
Given a prime power $q = p^e$, integer $r$, and exponent $w\geq 2$ so that
$w$ is relatively prime to $q^r-1$, $r$ is relatively prime to $w!$, and $w$ is not a power of $p$,
then $\Phi_w$ is an orthomorphism on $\PG(r-1, \F_q)$.
\end{theorem}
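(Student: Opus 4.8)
The plan is to verify the defining property of an orthomorphism directly: I will show that no line of $S = \PG(r-1,\F_q)$ contains three distinct points whose $\Phi_w$-images are collinear, so that $\Phi_w(\ell)$ is a cap for every line $\ell$. Suppose, for contradiction, that three distinct collinear points of $S$ carry labels $l, m, n \in \F^\ast$, where $\F = \F_{q^r}$. Since three distinct collinear points span a $2$-dimensional $\F_q$-subspace and $m, n$ already span it, I may write $l = bm + cn$ with $b, c \in \F_q$, and $b, c$ are both nonzero because $l$ is proportional to neither $m$ nor $n$. Assume the images are collinear, so $d\,l^w + e\,m^w + f\,n^w = 0$ for some $(d,e,f) \in \F_q^3 \setminus \{0\}$. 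Substituting $l = bm + cn$, dividing through by $m^w$, and writing $y := n/m \in \F$, this becomes $Q(y) = 0$, where
\[ Q(x) := d\,(b + cx)^w + f\,x^w + e \ \in\ \F_q[x]. \]

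The crux — and the only place where the hypothesis that $w$ is not a power of $p$ is used — is to show that $Q$ is a nonzero polynomial of degree at most $w$; its degree being $\le w$ is immediate from the shape of $Q$. If $d = 0$ this is clear from $(e,f) \neq (0,0)$. If $d \neq 0$, I expand $(b+cx)^w$ by the binomial theorem over $\F_q$: because $w$ is not a power of $p$, Lucas's theorem provides an index $k_0$ with $0 < k_0 < w$ and $\binom{w}{k_0} \not\equiv 0 \pmod p$ (for instance $k_0 = d_i p^i$ where $d_i p^i$ is the lowest-order nonzero term in the base-$p$ expansion of $w$ when $w$ has $\geq 2$ nonzero digits, or $k_0 = p^i$ when $w = d_i p^i$ with $d_i \geq 2$). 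Then the coefficient of $x^{k_0}$ in $Q$ equals $d\binom{w}{k_0}b^{w-k_0}c^{k_0}$, which is nonzero in $\F_q$ since $b,c,d$ are all nonzero and $0 < k_0 < w$ keeps the $fx^w + e$ term from contributing; hence $Q$ is nonzero (indeed non-constant).

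Now I derive the contradiction from $Q(y) = 0$. If $Q$ were a nonzero constant this is already impossible, so $Q$ is non-constant, and $y$ is algebraic over $\F_q$ of degree $n_0 := [\F_q(y):\F_q] \le \deg Q \le w$. On the other hand $y \in \F_{q^r}$ forces $n_0 \mid r$, and every positive integer $\le w$ divides $w!$, so $n_0 \mid \gcd(r, w!) = 1$ by hypothesis; thus $n_0 = 1$ and $y \in \F_q$. But $y = n/m \notin \F_q$, since $m$ and $n$ label distinct points of $S$ and so are not $\F_q$-proportional. This contradiction completes the argument; the remaining hypothesis $\gcd(w, q^r-1) = 1$ enters only to guarantee, as noted after \cref{def:Phi}, that $\Phi_w$ is a well-defined bijection on the points of $S$ in the first place.

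I expect the only genuine obstacle to be the non-vanishing of $Q$: when $w$ is a power of $p$ one has $(b+cx)^w = b^w + c^w x^w$, so $Q$ can vanish identically — consistent with $\Phi_w$ being a collineation rather than an orthomorphism in that case — which is precisely why the binomial-coefficient count requires the hypothesis and cannot be bypassed. Everything else is routine bookkeeping with the labeling of \cref{def:Phi} and the degree/gcd arithmetic.
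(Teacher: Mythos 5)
Your proposal is correct and follows essentially the same route as the paper: write $l=bm+cn$, reduce a putative collinearity of the images to a polynomial relation of degree at most $w$ over $\F_q$ for $y=n/m$, use the hypothesis that $w$ is not a power of $p$ to see the polynomial is nonzero, and then use $\gcd(r,w!)=1$ to force $y\in\F_q$, contradicting that $m$ and $n$ label distinct points. Your Lucas-theorem justification of the nonzero middle binomial coefficient and the explicit treatment of the case $d=0$ merely make precise steps the paper states more tersely.
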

\begin{proof}
We let the lines of space $S'$ be the pre-images of the lines of $S$ under this bijection, that is $S' = \Phi_{-w}(S)$.  We will show that $S$ and $S'$ are orthogoval; \cref{lemma:orthomorphism-basic} then completes the proof.

Let $l$, $m$, and $n$ label three colinear points of $S$.  Then there exists $a, b, c \in \F_q$ with
  $al - bm - cn = 0$.  These three points are colinear in $S'$ iff  we have some $d, e, f\in \F_q$ with
$dl^w - em^w - fn^w = 0$.  As $l$, $m$, and $n$  are the labels of distinct points, none of $a$, $b$, of $c$ is zero, and we may assume $a=1$; similarly we may assume $d=1$.  Then $l = bm + cn$ and we have
\[ \left(bm+cn\right)^w - em^w - fn^w = 0. \]
Dividing by $n^w$ and setting $x = m/n$ gives
\[ (bx+c)^w - ex^w - f = 0. \]
Since $w$ is not a power of $p$, the expansion of $(bx+c)^w$ has a nonzero term
besides $b^wx^w$ and $c^w$; that term cannot be cancelled
by either $ex^w$ or %, since $w \geq 2$, by
$f$.
So we have a nonzero polynomial of degree at most $w$ with root $x \in \F$.  But $x \in \F_{q^r}$, so the minimal polynomial for $x$ has degree dividing $r$.
Thus the degree must be 1, with $x \in \F_q$.  That makes $m$ and $n$ labels of the same point, contrary to assumption.
\end{proof}
Note that this lemma is vacuous if $r$ is even, as then there is no possible $w$; similarly for $r=3$ the hypotheses cannot be satisfied. For other odd $r$ we have instances, for example with $r=q=5$, $w$ must be relatively prime to 2, 11, and 17 and less than 5, so that $w=3$ and $w=4$ give orthomorphisms.

We can use this lemma to show the existence of arbitrarily large sets of mutually orthogoval spaces if we allow the dimension to get high enough.
\begin{corollary} \label{th:arb-large-pg}
  If $q$ is a prime power, $q+1$ is not a power of 2, and $n \geq 1$, there is some $r$ so that we can find a set of $n$ mutually orthogoval $\PG(r-1, \F_q)$.
\end{corollary}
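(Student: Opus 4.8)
\section*{Proof proposal for \texorpdfstring{\cref{th:arb-large-pg}}{the corollary}}

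The plan is to realize the $n$ spaces as the orbit of a single orthomorphism. Precisely, I would look for an exponent $w$ and a parameter $r$ so that $\Phi_w^{\,i} = \Phi_{w^i}$ is an orthomorphism of $S := \PG(r-1,\F_q)$ for each $i$ with $1 \le i \le n-1$; then \cref{lemma:orthomorphism-basic}(4), applied with $f = \Phi_w$, immediately yields that the $n$ spaces $\{\Phi_w^{\,i}(S) \st 0 \le i \le n-1\}$ are mutually orthogoval, which is what we want. (The case $n = 1$ is trivial, taking any $\PG(r-1,\F_q)$.) Here I am using the relation $\Phi_{ij} = \Phi_i \circ \Phi_j$ noted after \cref{def:Phi} to identify $\Phi_w^{\,i}$ with $\Phi_{w^i}$.

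The hypothesis on $q$ enters only in the choice of $w$. Since $q+1$ is not a power of $2$, it has an odd prime divisor $\ell$, and I would set $w = \ell$. This $\ell$ is a convenient exponent: from $\gcd(q, q+1) = 1$ we get $\ell \neq p$, so no power $\ell^i$ is a power of $p$; and from $\ell \mid q+1$ with $\ell$ odd we get $q \equiv -1 \pmod{\ell}$. Now choose $r$ to be any prime larger than $\ell^{n-1}$ (infinitely many primes are available). Then $r$ is odd, so $q^r \equiv (-1)^r \equiv -1 \pmod{\ell}$, hence $\ell \nmid q^r - 1$ and therefore $\gcd(\ell^i, q^r-1) = 1$ for every $i \ge 1$; and $r$, being a prime exceeding $\ell^{n-1} \ge \ell^i$, has no prime factor $\le \ell^i$, so $r$ is relatively prime to $(\ell^i)!$ for all $i$ with $1 \le i \le n-1$. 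These are exactly the three hypotheses of \cref{thm:more-orthomorphisms} for the exponent $\ell^i$, so $\Phi_{\ell^i}$ is an orthomorphism of $\PG(r-1,\F_q)$ for each such $i$, and the first paragraph concludes the proof.

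I do not expect a genuine obstacle: the real content is already packaged in \cref{thm:more-orthomorphisms}. The only point needing care is coordinating the two choices, picking $r$ past $\ell^{n-1}$ (rather than merely past $\ell$) so that the hypotheses of that theorem hold simultaneously for every exponent $\ell, \ell^2, \dots, \ell^{n-1}$, and the single use of ``$q+1$ not a power of $2$'', which serves only to supply an odd prime $\ell \mid q+1$ --- equivalently, an odd prime $\ell \neq p$ with $q \equiv -1 \pmod{\ell}$, which is what makes $\gcd(\ell, q^r - 1) = 1$ automatic for odd $r$.
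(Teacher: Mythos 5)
Your proposal is correct and follows essentially the same route as the paper: take an odd prime $\ell \mid q+1$, choose a prime $r$ exceeding a suitable power of $\ell$, verify the three hypotheses of \cref{thm:more-orthomorphisms} for $w=\ell^i$, and conclude via \cref{lemma:orthomorphism-basic}. The only (harmless) differences are that the paper takes $r > \ell^{n}$ and uses exponents $i=1,\dotsc,n$ (yielding $n+1$ spaces) whereas you take $r>\ell^{n-1}$ and stop at $i=n-1$, and that you spell out explicitly the checks (such as $\ell\neq p$) that the paper leaves implicit.
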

\begin{proof}
  Let $s$ be an odd prime factor of $q+1$ and
  pick some prime $r$ greater than $s^n$.  Then as $q \equiv -1 \pmod s$ and $r$ is odd, $q^r \equiv -1 \pmod s$ and $\gcd(s^i, q^r-1)=\gcd(s^i,-2) = 1$ for any $i$.  \cref{thm:more-orthomorphisms}
 with $w=s^i$ then shows that
$\Phi_{s^i}$ is an orthomorphism for $i \in \{1, 2, \dotsc, n\}$, so by \cref{lemma:orthomorphism-basic}
  any pair of the induced planes are orthogoval.
\end{proof}

By computer search we can find large sets of mutually orthogoval spaces of much lower dimension than this corollary gives.
\begin{theorem} \label{thm:big-sets-pg}
  There exist sets of
  6 mutually orthogoval $\PG(4, \F_2)$,
  18 mutually orthogoval $\PG(6, \F_2)$,
  10 mutually orthogoval $\PG(4, \F_3)$,
  78 mutually orthogoval $\PG(6, \F_3)$,
  3 mutually orthogoval $\PG(4, \F_4)$,
  11 mutually orthogoval $\PG(6, \F_4)$, and
  7 mutually orthogoval $\PG(4, \F_5)$.
\end{theorem}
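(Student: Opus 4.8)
The plan is to turn each required set into an explicit list of bijections on the points of the standard projective space and to reduce ``mutually orthogoval'' to a finite, mechanically checkable condition, so that exhibiting such bijections (located by computer) is a complete proof rather than merely numerical evidence. By \cref{lemma:orthomorphism-basic}, to obtain $N$ mutually orthogoval $\PG(r-1,\F_q)$ it suffices to produce bijections $g_0 = \mathrm{id}, g_1, \dots, g_{N-1}$ on the point set of $S = \PG(r-1,\F_q)$ such that every composite $g_k^{-1}\circ g_j$ with $j\neq k$ is an orthomorphism (this includes each $g_j$ itself, the case $k=0$); the spaces are then $g_0(S),\dots,g_{N-1}(S)$, any two of which are orthogoval by parts~1--3 of that lemma. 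This is what lets us beat the very high dimensions forced by \cref{th:arb-large-pg}.

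First I would install the labelling of \cref{def:Phi}, identifying the points of $S$ with $\F_{q^r}^\ast/\F_q^\ast$. Since the labelling map is $\F_q$-linear, three distinct points with labels $l,m,n$ are collinear in $S$ precisely when $l,m,n$ are $\F_q$-linearly dependent in $\F_{q^r}$, i.e. when the $3\times r$ matrix of their $\F_q$-coordinates has rank $2$. Hence ``$f$ is an orthomorphism'' unwinds to: for every collinear triple $\{l,m,n\}$ of $S$ the images $f(l),f(m),f(n)$ are $\F_q$-linearly independent. This is a finite check, and for the monomial maps $\Phi_i$ of \cref{def:Phi} it is a check on the exponent $i$ alone; since $\Phi_i^{\,j}=\Phi_{i^j}$, part~4 of \cref{lemma:orthomorphism-basic} reduces the search for a \emph{cyclic} family $\{\Phi_{i^j}(S):0\le j\le N-1\}$ to finding a single exponent $i$ coprime to $q^r-1$ for which $\Phi_{i^j}$ is an orthomorphism for $1\le j\le N-1$. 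As $\Phi_i$ depends only on $i$ modulo $(q^r-1)/(q-1)$, there are at most $(q^r-1)/(q-1)$ candidate maps, which is tiny in every case here, so a plain loop settles these cases whenever a cyclic family of the right length exists and its powers are in fact orthomorphisms.

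When a cyclic $\Phi$-family is unavailable (for example for the set of $11$ mutually orthogoval $\PG(6,\F_4)$, where the relevant multiplicative group has no element of order $11$) or too short, I would fall back to a general search over bijections, building the set one map at a time with backtracking and delegating feasibility to a Boolean satisfiability / SMT solver, as in the footnote to the preceding theorem. The constraints are exactly the orthomorphism conditions above, written over $\F_{q^r}$; fixing $g_0=\mathrm{id}$ and quotienting by the collineation group of $S$ (which preserves lines and caps, hence orthomorphisms) breaks symmetry and keeps the instances manageable. This search is the step I expect to be the main obstacle: the space of all bijections on $\PG(r-1,\F_q)$ is factorially large, so success depends on the structured starting point, on aggressive pruning, and on an efficient encoding of ``every newly chosen map keeps all collinear triples non-collinear, also in composition with the previously chosen maps.''

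Finally, with explicit bijections (or explicit exponents $i$ in the cyclic cases) in hand, I would record them and verify the conclusion \emph{independently} and by direct computation in $\F_{q^r}$: for each ordered pair $(j,k)$ with $j\neq k$ and each line $l$ of $S$, check that $(g_k^{-1}\circ g_j)(l)$ contains no three collinear points of $S$. This verification is self-contained and finite, so its successful completion proves the theorem; the search merely locates the witnesses, whose correctness is independent of how they were found.
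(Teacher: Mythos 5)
Your overall strategy is the same as the paper's: label the points by nonzero field elements modulo $\F_q^\ast$ as in \cref{def:Phi}, take powers of a single monomial map $\Phi_w$, verify by computer that each $\Phi_{w^i}$, $1 \le i \le n$, is an orthomorphism (the image of every line is a cap, i.e.\ no collinear triple has $\F_q$-linearly dependent images), and invoke part~4 of \cref{lemma:orthomorphism-basic} to conclude that the $n+1$ spaces $\Phi_{w^i}(S)$, $0 \le i \le n$, are mutually orthogoval. The paper's proof is exactly this, with a table of witnesses $(q,r,w,n)$, namely $(2,5,3,5)$, $(2,7,3,17)$, $(3,5,17,9)$, $(3,7,25,77)$, $(4,5,7,2)$, $(4,7,23,10)$ and $(5,5,3,6)$; your finite verification criterion and your reduction of mutual orthogovality to ``every $g_k^{-1}\circ g_j$ is an orthomorphism'' are both correct.

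However, your parenthetical about $\PG(6,\F_4)$ is a genuine error that would derail you on that case. You claim a cyclic $\Phi$-family of size $11$ is unavailable there because the relevant multiplicative group has no element of order $11$. But nothing in part~4 of \cref{lemma:orthomorphism-basic} --- nor in your own formulation ``find a single exponent $i$ with $\Phi_{i^j}$ an orthomorphism for $1 \le j \le N-1$'' --- requires $\Phi_w$ to have order $N$: the exponents $w^j$ need not return to $1$, and distinctness of the resulting spaces is automatic because no space is orthogoval to itself. The paper covers precisely this case with $w=23$, $n=10$: the ten maps $\Phi_{23^j}$, $1 \le j \le 10$, are orthomorphisms of $\PG(6,\F_4)$, giving $11$ mutually orthogoval spaces together with the standard one. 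By wrongly excluding the exponent-power method here, you route the hardest case into your fallback --- an SMT search over arbitrary bijections of the $5461$ points --- which you yourself identify as the main obstacle and which is not realistically feasible. So as written your plan does not deliver the $\PG(6,\F_4)$ item, even though your primary method would succeed there if you simply ran the exponent loop for that case as well.
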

\begin{proof}
  For the following values, the sets
  $\{~\Phi_{w^i} \st 1 \leq i \leq n~\}$ are orthomorphisms, and any pair of the
   induced planes are orthogoval.  Together with the standard space this gives a set of $n+1$ spaces.
\begin{center}
  \begin{tabular}{rrl
      r}
  $q$ & $r$ & $w$ & $n$ \\ \hline
  2 & 5 & 3, 11, 13, or 17 & 5 \\
  2 & 7 & 3 or 7 & 17 \\
  \hline
  3 & 5 & 17, 19 & 9 \\
  3 & 7 & 25 & 77 \\
  \hline
  4 & 5 & 7 & 2 \\
  4 & 7 & 23 & 10 \\
  \hline
  5 & 5 & 3, 9 & 6 \\
\end{tabular}
\end{center}
\end{proof}

I only know a few examples of orthogoval projective spaces of odd dimension.
\begin{lemma}
  There is a set of seven mutually orthogoval $\PG(3, \F_2)$
\end{lemma}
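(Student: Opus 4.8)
The plan is to follow the pattern of the eight-space $\AG(3,\F_3)$ construction above: produce a single permutation $\sigma$ of the fifteen points of $S=\PG(3,\F_2)$ whose powers $\sigma^1,\dots,\sigma^6$ are all orthomorphisms, and conclude from part~4 of \cref{lemma:orthomorphism-basic} that $\{\sigma^i(S):0\le i\le 6\}$ is the desired set. The first step is a simplification peculiar to order $2$: in $\PG(3,\F_2)$ every line has exactly three points, so a three-point set is a cap if and only if it is not a line. Hence a bijection $f$ on the points is an orthomorphism exactly when it sends no line to a line, and what must be checked for each $i\in\{1,\dots,6\}$ is just that $\sigma^i$ maps none of the $35$ lines of $S$ to a line.

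Second, I would take $\sigma$ to have order $7$ --- necessarily a product of one or two $7$-cycles on the fifteen points. This is convenient because then $\sigma^6=\sigma^{-1}$, $\sigma^5=\sigma^{-2}$, $\sigma^4=\sigma^{-3}$, so by part~3 of \cref{lemma:orthomorphism-basic} (a bijection is an orthomorphism iff its inverse is) it is enough to verify the line-dispersing property for $\sigma$, $\sigma^2$ and $\sigma^3$ alone. Concretely I would label the points $0,\dots,14$ as the nonzero vectors of $\F_2^4$, with the lines being the triples $\{x,y,x+y\}$, exhibit an explicit such $\sigma$ in cycle notation found by search, and display the mechanical verification that none of the $3\cdot 35=105$ sets $\sigma(\ell),\sigma^2(\ell),\sigma^3(\ell)$ is a line. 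The seven spaces so obtained are automatically distinct, since an orthomorphism sends at least one line to a non-line, so no $\sigma^k(S)$ with $1\le k\le 6$ equals $S$.

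The one genuine difficulty is locating such a $\sigma$. There is no evident algebraic shortcut: for $q=2$, $r=4$ the maps $\Phi_w$ of \cref{def:Phi} that are not collineations reduce essentially to inversion and its Frobenius twists, and no such family supplies a chain of six orthomorphisms, while \cref{thm:lo-pg2k-fq} produces orthogoval pairs only in even dimension. I would therefore enumerate the permutations of order $7$ up to conjugacy by the collineation group of $\PG(3,\F_2)$ and test the conditions on $\sigma$, $\sigma^2$ and $\sigma^3$ for each candidate; as with the analogous $\AG(3,\F_3)$ search above, I expect solutions to be plentiful, so that the proof reduces to exhibiting one $\sigma$ together with its routine verification. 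I anticipate that this reliance on computer search, rather than any step of the argument itself, will be the main obstacle.
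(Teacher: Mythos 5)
Your plan is exactly the paper's method: the paper labels the fifteen points by discrete logarithms of a primitive element of $\F_{16}$ and exhibits an explicit order-$7$ permutation (a product of two $7$-cycles with one fixed point) whose powers applied to the standard $\PG(3,\F_2)$ give the seven mutually orthogoval spaces, with the verification left to computation just as you describe. The only piece you do not supply is the explicit witness permutation, which the computer search does indeed produce; your reductions (orthomorphism $=$ ``no line maps to a line'' over $\F_2$, and checking only $\sigma,\sigma^2,\sigma^3$ via inverses) are correct and if anything streamline the verification.
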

\begin{proof}
  Number the points of $\PG(3,\F_2)$ as follows: with $z \in \F_{16}$ satisfying $z^5+z+1$, any point $(x_1:x_2:x_3:x_4) \in \PG(3,\F_2)$ can be identified with
$\sum x_i z^i$, which itself can be represented by its logarithm to base $z$, giving a bijection $f$ from $\{0, \dots, 14 \}$, with $f(i) = z^i$.
  The permutation with cycle structure \[(0,4,5,6,2,10,7)(1,11,8,12,14,9,13)\] (and fixed point 3) has order 7; all the images of the standard space under powers of this permutation are orthogoval to one another.
\end{proof}

\begin{lemma} \label{thm:lo-pg3-f3}
  There is a pair of orthogoval $\PG(3,\F_3)$.
\end{lemma}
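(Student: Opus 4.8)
The plan is to construct an \emph{orthomorphism} of $\PG(3,\F_3)$ directly and certify it by computer, because the uniform constructions of this section do not reach odd dimension. Indeed, writing $\PG(3,\F_3)=\PG(r-1,\F_q)$ forces $q=3$ and $r=4$. The inversion map $\Phi_{-1}$ of \cref{thm:lo-pg2k-fq} is useless here: three labels $l,m,n$ that are collinear in $S$ are collinear in $\Phi_{-1}(S)$ exactly when $y=m^{-1}n$ satisfies a quadratic over $\F_q$, and the decisive step of that proof --- ``the degree of $\F$ is odd, so $y\in\F_q$'' --- fails because $\F=\F_{81}$ contains $\F_9$, so $y$ may genuinely lie in $\F_9\setminus\F_3$. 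Likewise \cref{thm:more-orthomorphisms} is unavailable, since its hypothesis that $r$ be relatively prime to $w!$ cannot hold with $r=4$ and $w\ge 2$; and the even-dimensional construction does not restrict to hyperplanes, as a fixed $\PG(3,\F_3)$ inside $\PG(4,\F_3)$ is not preserved by $\Phi_{-1}$. So I would search instead for a single permutation $f$ of the $40$ points of $\PG(3,\F_3)$ all of whose line-images are caps; by part (1) of \cref{lemma:orthomorphism-basic} the space $f(S)$ is then orthogoval to $S=\PG(3,\F_3)$, which is exactly what is wanted.

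Concretely I would: (i) fix a labelling of the $40$ points --- say by the representative vector in $\F_3^4$ with leading entry $1$, encoded in base $3$, or, in the style of the $\PG(3,\F_2)$ lemma above, by nonzero elements of $\F_{81}$ modulo $\F_3^{\ast}$; (ii) search for a permutation $f$ with $f(\ell)$ a cap for every line $\ell$, either by handing the constraints to an SMT solver as in the earlier footnote or by a backtracking search restricted to permutations of small order (involutions, or cycle types forcing few independent choices) and reduced by the action of the collineation group of $\PG(3,\F_3)$ on the set of orthomorphisms (for instance by fixing the images of a projective frame); a natural variant worth trying first is to start from one of the known eight mutually orthogoval $\AG(3,\F_3)$ and attempt to extend the underlying affine-ovalinear map to the $13$ points at infinity; (iii) record the $f$ that is found, e.g.\ by its cycle decomposition, as was done for $\PG(3,\F_2)$. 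The verification is routine: $\PG(3,\F_3)$ has exactly $130$ lines, each of size $4$, so for each line $\ell$ one checks that none of the four triples inside $f(\ell)$ is collinear --- equivalently, that $|f(\ell)\cap\ell'|\le 2$ for every one of the $130$ lines $\ell'$ --- a computation of well under $130^2$ steps, to be reproduced independently in SageMath.

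The real obstacle is conceptual, not technical: there is no evident structural reason such an $f$ must exist, so the argument is forced to exhibit and certify a single example. The two practical concerns are that the ambient permutation space (the symmetric group on $40$ points) is far too large for a naive sweep --- handled by the symmetry reduction and/or the solver above --- and that the displayed certificate be trustworthy --- handled by the independent recomputation of all line-on-cap incidences. I would not expect this method to give three or more mutually orthogoval $\PG(3,\F_3)$: by part (4) of \cref{lemma:orthomorphism-basic} that would require $f$ and $f^2$ to be simultaneously orthomorphisms, which is a separate search and, in the light of the odd-dimensional scarcity noted elsewhere in this section, probably an unsuccessful one.
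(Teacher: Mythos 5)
Your approach is the same as the paper's: the paper proves this lemma exactly as you propose, by labelling the $40$ points of $\PG(3,\F_3)$ with discrete logarithms of nonzero elements of $\F_{81}$ modulo $\F_3^{\ast}$ (a primitive $z$ with $z^4 - z^3 - 1 = 0$) and exhibiting a single explicit permutation of the labels, found by search, whose images of the standard lines form the second space; your diagnosis of why the uniform constructions fail here is also sound ($\Phi_{-1}$ needs odd degree, while $\F_{81}$ contains $\F_9$; and \cref{thm:more-orthomorphisms} needs $r$ coprime to $w!$, impossible for $r=4$). Your verification plan (all $130$ lines of size $4$, check every image is a cap, i.e.\ meets every line in at most $2$ points) is exactly the certification the paper relies on.

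The gap is that you stop at a search plan and never produce the witness. The statement is a bare existence claim whose entire proof content is the explicit certificate: the paper records a concrete permutation (a $36$-cycle together with the $3$-cycle $(7,26,30)$, fixing one point) and asserts that the induced space is orthogoval to the standard one. As you yourself note, there is no structural reason forcing such an $f$ to exist, so ``run a solver and record what it finds'' is contingent on the search succeeding and is not yet a proof; to complete the argument you must actually exhibit a permutation (or other explicit datum) and report the check. Everything else in your write-up --- the reduction to finding an orthomorphism via part (1) of \cref{lemma:orthomorphism-basic}, the labelling, the symmetry reduction, the independent recomputation --- is fine and matches the paper's method; only the example itself is missing.
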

\begin{proof}
 Number the points of $\PG(3,\F_3)$ as follows: with $z \in \F_{81}$ satisfying $z^4-z^3-1 = 0$, any point $P = (a:b:c:d)$ of $\PG(3, \F_3)$ can be identified with integer $i$, where $0 \leq i < 40$,
  and $z^i = az^3+bz^2 + cz+d$.  Then the permutation with cycle representation
  \begin{multline*}(0, 16, 10, 22, 4, 24, 37, 6, 18, 13, 21, 36, 28, 31, \\ 34, 32, 33, 2, 27, 9, 5, 17, 38, 23, 11, 15, 14, 12, \\ 8, 20, 35, 19, 25, 1, 3, 29)\,(7,26,30)
    \end{multline*}
  gives the second space.
\end{proof}

\section{Bounds} \label{sec:bounds}

Just as for orthogoval planes, we can find an upper bound for the number of members in a set of mutually orthogoval spaces by counting triples of points.  The key observation is that a set of three points can be colinear in at most one space in the set.

For affine spaces $\AG(d, \F_q)$, every line contains $q$ points, and there are $p=q^d$ points and $l = \binom p 2 / \binom q 2$ lines in each space.  Each space thus has $l \binom q 3$ colinear triples of points.  There are $\binom p 3$ sets of 3 points, so the number of spaces in a set of mutually orthogoval affine spaces is at most
\[ \frac {\binom p 3} {l \binom q 3} =
   \frac {\binom p 3 \binom q 2} {\binom p 2 \binom q 3} =
   \frac {p-2} {q - 2} =
   \frac {q^d-2} {q-2}. \]

For projective spaces $\PG(d, \F_q)$, every line contains $q+1$ points and there are $p'=\frac {q^{d+1} - 1}{q-1}$ points and $l' = \binom {p'} 2 / \binom {q+1} 2$ lines in each space.  Each space thus has $l' \binom {q+1} 3$ colinear triples of points.  There are $\binom {p'} 3$ sets of 3 points, so the number of spaces in a set of mutually orthogoval projective spaces is at most
\[ \frac {\binom {p'} 3} {l' \binom {q+1} 3} =
   \frac {\binom {p'} 3 \binom {q+1} 2} {\binom {p'} 2 \binom {q+1} 3} =
   \frac {{p'}-2} {q - 1} =
   \frac {q^{d+1} - 2q + 1} {(q-1)^2}. \]

A slightly lower bound can be derived from the Johnson bound on constant-weight codes~\cite{johnson1962new}, which also gives a bound on packing problems~\cite{mills_mullin_1992,stinson_packings_2007}.
That bound states that the number of subsets of size $b$ of a set $S$ of size $n$ such that no three
elements of $S$ are contained in more than one block is at most
   \[ \left\lfloor \frac n b \left\lfloor \frac {n-1} {b-1} \left\lfloor \frac {n-2} {b-2} \right\rfloor \right\rfloor \right\rfloor. \]
Applying this to the affine case, with the lines of all the spaces as blocks, and using the fact that each space contains $l = \frac {p(p-1)}{q(q-1)}$ lines, gives a bound of at most
\[ \frac {q(q-1)}{p(p-1)} \left\lfloor \frac p q \left\lfloor \frac {p-1} {q-1} \left\lfloor \frac {p-2} {q-2} \right\rfloor \right\rfloor \right\rfloor \]
  members in any set of mutually orthogoval affine spaces.  This can be seen (by dropping the floor operations) to be a bit smaller than the bound we derived above, but also is less easily handled algebraically.

The sets we can construct have in general many fewer elements than the bounds allow for, so either the bounds or the constructions can be improved.

\section{Other generalizations} \label{sec:other-gen}

The generalization of the notion of orthogoval to higher dimensions used above is not the only possibility.
In this section we will use the term \emph{line-orthogoval} to distinguish the notion of \cref{main-def} from the other notions considered here.

\subsection{Hyperplane orthogoval and askew spaces} \label{sec:askew}

Lines are hyperplanes of a 2-dimensional geometry, so we can consider replacing some or all occurrences of ``line'' in \cref{main-def} by ``hyperplane'' to have
% \begin{definition} \label{def:hyper-orthogoval}
\begin{quote}
  A pair of dimension $k$ spaces, both projective or both affine, of the same order and on the same point set are \emph{hyperplane-orthogoval} if each line of one space intersects each hyperplane of the other space in at most $k$ points.
\end{quote}
% \end{definition}
When $k=2$, hyperplane-orthogoval is the same as line-orthogoval.
Clearly $k$ is the smallest possible value for the ``at most $k$ points'' part of this definition whenever it is not larger than the number of points on a line,
as any $k$ points on a line of one space will determine a hyperplane of the other.
Whenever $k$ is larger than or equal to the number of points in a line, that is $q$ for $\AG(r, \F_q)$ and $q+1$ for $\PG(r, \F_q)$, the property is trivial.

A set of points in a space of dimension $k$ such that any $k+1$ of them span the space is known as an \emph{arc}.  Arcs were first studied by Segre and have been widely investigated since then~\cite{blokhuis1990,HIRSCHFELD201544}.
% Arcs in projective spaces are equivalent to linear maximum distance separable (MDS) codes.
% The ``Main conjecture'' of linear MDS codes is that such a code can have length at most $q+1$;  or equivalently that an arc in $\PG(k \F_q)$ contains at most $q+1$ points; this has been proven in many special cases.
So an equivalent definition of hyperplane-orthogoval for projective spaces is that each line of one space is an arc of the other.
However, this definition is still trivial whenever $k$ is larger than the number of points in a line.
We instead have a definition below that is nontrivial in all cases.
Recall that a set $S$ of points in a finite geometry of dimension $d$ is in \emph{general linear position}
iff for any $T \subseteq S$, the points of $T$ span a subspace of dimension $\min(|T|-1, d)$.  That is (when $d$ is large enough) no three points are colinear, no four are coplanar, and so on; the points are as independent as possible given the dimension of the ambient space.

If two spaces $S_1$ and $S_2$ of dimension $k\geq 2$ are hyperplane-orthogoval and $k$ is not larger than the number of points on a line, the spaces must be line-orthogoval; if $l_1$ is a line of $S_1$ and $l_2$ is a line of $S_2$ and they meet in 3 or more points, then picking $k-2$ extra points on $l_1$ determines at least one hyperplane of $S_2$ containing $k+1$ or more points of $l_1$.  By similar reasoning, a line of one space cannot contain any more than $j+1$ points of any subspace of dimension $j$ of the other space.
So, when $k$ is less than the number of points in a line and $S_1$ and $S_2$ are hyperplane-orthogoval,
the points of a line in one space are in general linear position in the second space.  This leads to our improved definition that is not trivial for small $q$ (shown in the introduction as \cref{def:askew}):

% \begin{definition}
\begin{quote}
A pair of dimension $k$ spaces, both projective or both affine, of the same order and on the same point set are \emph{askew} if the points of any line of one space are in general linear position in the other.
\end{quote}
% \end{definition}

The inversion mapping of \cref{sec:lo-projective} produces projective spaces that are askew if the dimension is one less than a prime:
\begin{theorem} \label{thm:hyperplane-ortho}
If $k+1$ is prime and $q$ is a prime power, then there is a pair of askew $\PG(k, \F_q)$.
\end{theorem}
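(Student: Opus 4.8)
The plan is to use the inversion orthomorphism $\Phi_{-1}$ from \cref{sec:lo-projective}. Set $r = k+1$ (prime by hypothesis), let $\F = \F_{q^{k+1}}$, label the points of $S = \PG(k,\F_q)$ by nonzero elements of $\F$ as in \cref{def:Phi}, and put $S' = \Phi_{-1}(S)$. Since $\Phi_{-1}\circ\Phi_{-1} = \Phi_1$ is the identity, $\Phi_{-1}$ is an involutory collineation between $S$ and $S'$, so a set of points is in general linear position in $S'$ exactly when its $\Phi_{-1}$-image is in general linear position in $S$. Because of this involutivity, both directions of the askew condition for the pair $(S,S')$ reduce to the single statement: for every line $l$ of $S$, the $q+1$ points of $\Phi_{-1}(l)$ are in general linear position in $S$. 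This is what I would prove.

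Next I would reduce to a convenient parametrization of such a line. A line $l$ of $S$ corresponds to a $2$-dimensional $\F_q$-subspace $W = \F_q u \oplus \F_q v$ of $\F$, and $\Phi_{-1}(l)$ has as labels the set $\{\,w^{-1} : w \in W\setminus\{0\}\,\}$ taken up to $\F_q^\ast$-scaling. Multiplication of all labels by $u$ is an $\F_q$-linear bijection of $\F$, hence a collineation of $S$, and it preserves general linear position; applying it we may assume $u = 1$, i.e. $W = \F_q \oplus \F_q v$ with $v \notin \F_q$. Then the labels of $\Phi_{-1}(l)$ may be taken to be $1$ together with $(v+\mu)^{-1}$ for $\mu$ ranging over $\F_q$, and these represent $q+1$ distinct points.

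Now I would argue by contradiction. Failure of general linear position is witnessed already by a minimal $\F_q$-linear dependence among at most $k+1$ of these points, say $c_0 + \sum_{i=1}^{s} c_i (v+\mu_i)^{-1} = 0$ with distinct $\mu_i \in \F_q$, all displayed coefficients nonzero, and possibly with the constant term $c_0$ absent; here the total number of terms is at most $k+1$. Multiplying through by $\prod_{i=1}^{s}(v+\mu_i)$ yields $P(v) = 0$, where $P(x) = c_0\prod_{i=1}^{s}(x+\mu_i) + \sum_{i=1}^{s} c_i\prod_{j\neq i}(x+\mu_j) \in \F_q[x]$. If $c_0 \neq 0$ then $\deg P = s \le k$ (as $s+1 \le k+1$) with leading coefficient $c_0 \neq 0$; if $c_0$ is absent then $\deg P \le s-1 \le k$ and $P(-\mu_\ell) = c_\ell\prod_{j\neq\ell}(\mu_j-\mu_\ell) \neq 0$, so again $P \neq 0$. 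On the other hand, since $k+1$ is prime the minimal polynomial of $v$ over $\F_q$ has degree $1$ or $k+1$, and it is not $1$ because $v \notin \F_q$; so it has degree $k+1$. Then $P(v) = 0$ with $0 \le \deg P \le k < k+1$ forces $P = 0$, a contradiction. Hence $\Phi_{-1}(l)$ is in general linear position in $S$ for every line $l$, and $S,S'$ are askew.

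The main obstacle, I expect, is the degree bookkeeping: one must observe that a violation of general linear position can be taken to involve at most $k+1$ points, so that clearing denominators produces a polynomial over $\F_q$ of degree at most $k$ — precisely the slack needed for the primality of $k+1$ (through the degree of the minimal polynomial of $v$) to annihilate the relation. The only genuinely computational point is that this polynomial is not identically zero, handled by a leading-coefficient or Lagrange-interpolation-style evaluation; and the reduction to the labels $1, (v+\mu)^{-1}$ rests on scalar multiplication being a collineation together with $\Phi_{-1}$ being an involution.
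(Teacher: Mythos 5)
Your proof is correct and follows essentially the same route as the paper's: the inversion map $\Phi_{-1}$, the involution symmetry, the reduction of a failure of general linear position to an $\F_q$-linear dependence among at most $k+1$ of the inverted labels, and clearing denominators to get a nonzero polynomial over $\F_q$ of degree at most $k$, which the primality of $k+1$ (via the degree of the field element) makes impossible. Your normalization of the line so that it contains the label $1$, giving points $1,(v+\mu)^{-1}$, and your direct leading-coefficient/evaluation check that the cleared polynomial is nonzero are only cosmetic streamlinings of the paper's $R=P/Q$ parametrization and its case analysis.
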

\begin{proof}
  We label the points of $S = \PG(k,\F_q)$ as in \cref{def:Phi}, with $z$ some primitive element of $\F_{q^{k+1}}$, and use $\Phi_{-1}$ to create a second space $T$ on the same set of points.  As $\Phi_{-1}$ is an involution we need only show that lines of $T$
  are in general position in $S$, and by symmetry can conclude that the lines of $S$ are in general position in $T$.

  For any $m$ with $2 \leq m \leq k$ consider some $m+1$ points $p_0, p_1, \dotsc, p_m$ in $S$
that are \emph{not} in general position.  Suppose further that we have
$p_i = (x_{i,0}: \cdots: x_{i,k})$.
Then these points lie in at least $l=k+1-m$ linearly independent hyperplanes $H_1, \dotsc, H_l$.
For each $i$ there are elements $c_{i,0}, \dotsc, c_{i,k} \in \F_q$, not all zero, so that
  \[ H_i = \left\{ (x_0 : \dots : x_k) \st \sum_{n=0}^k c_{i,n}x_n = 0 \right\}. \]
  If we form the $(m+1) \times (k+1)$ matrix
  \[ M = \begin{pmatrix} x_{0,0} & x_{0,1} & \dots & x_{0,k} \\
    x_{1,0} & x_{1,1} & \dots & x_{1,k} \\
    \vdots & \vdots & & \vdots \\
    x_{m,0} & x_{m,1} & \dots & x_{m,k} \\
  \end{pmatrix} \]
  we have \[ M (c_{i,0}, c_{i,1}, \dotsc, c_{i,k}) = 0 \]
  for each $i$,
  so $M$ has rank at most $m$.
  Thus there is some other nonzero vector $d = (d_0, d_1, \dotsc, d_k)$ with $dM = 0$.
  Noting now that $M (z^0, z^i, \dotsc, z^k)$ is a vector of labels for points $p_0, \dotsc, p_k$, we have that the $m+1$
  points are in general position
  iff their labels are linearly dependent over $\F_q$.  It is clear that this does not depend on which labels we choose.

  Suppose now some line $l$ of $T$ is not in general position in $S$.
  Then there are points $p_0, p_1, \dotsc, p_m$ in $S$ that are not in general position, with $m \leq k+1$.
  This line is the image of a line of $T$ through $\Phi_{-1}$; so let $P$ and $Q$ be two distinct points on that line and there must exist $a_i, b_i \in \F_q$ so that $\Phi_{-1}(p_i) = a_i P + b_i Q$.  Considering
  $P$ and $Q$ as labels, the label of $p_i$ is then $(a_i P + b_i Q)^{-1}$.  Now we have
  \[ \sum_{i=0}^m d_i (a_i P + b_i Q)^{-1} = 0.\]
  If we put $R = P/Q$ this gives
  \[ \sum_{i=0}^m d_i (a_i R + b_i)^{-1} = 0,\]
  and we can clear fractions to get
  \[ \sum_{i=0}^m d_i \prod_{j\not= i} (a_j R + b_j) = 0.\]
  Thus $R \in \F_{q^{k+1}}$ is the root of a polynomial of degree at most $m-1 \leq k$, namely
  \[ f(x) = \sum_{i=0}^m d_i \prod_{j\not= i} (a_j x + b_i) = 0.\]
  However, as $k+1$ is prime, the degree of $R$ can only be 1 or $k+1$, so if $f$ is nonzero it must be of degree 1.  That puts $R \in \F_q$ which is impossible as that would make $P$ and $Q$ labels for the same point.  So $f(x) = 0$ is the only remaining possibility.  This leads to a contradiction;  when $a_i \not= 0$ we would have
  \[ 0 = f(-b_i/a_i) = d_i \prod_{j\not= i} (a_j b_i/a_i - b_j), \]
  and as we must have $a_i b_j - a_jb_i \not= 0$ (so that $p_i$ and $p_j$ are distinct points) we have $d_i = 0$.  There can be at most one $a_i = 0$, if we suppose that this is $i=0$
  we now have\[ f(x) = d_0 \prod_{j\not= 0} (a_j x + b_j) \] (as all $d_i$ with $i \not= 0$ must be 0) and so $d_0 = 0$.  All the $d_i$ are zero; a contradiction.
\end{proof}

No other instances of askew spaces of dimension larger than two are known to me.

\subsection{Half-dimension orthogoval}
There is another way to generalize.  For orthogoval planes, the spaces are of dimension 2 and the lines of dimension 1---exactly half.  So perhaps that is the relationship to focus on.
\begin{definition}
  A pair of $2k$-dimensional spaces, both projective or both affine, of the same order and on the same point set are \emph{half-dimension-orthogoval} if each $k$-subspace of one space intersects each $k$-subspace of the other space in at most $k+1$ points.
\end{definition}
For $k=1$ this is the same as orthogoval.

The smallest non-trivial case for this would be $\AG(4, \F_2)$; each plane has 4 points so there is a real condition.  An exhaustive computer search shows that there does not exist a pair of half-dimension-orthogoval $\AG(4, \F_2)$.

If spaces $S$ and $T$ are half-dimension orthogoval, then they must also be line-orthogoval: suppose a line of $S$ and a line of $T$ meet in 3 or more points.  Then by adding $k-1$ more points we can get a set of $k+2$ points that must lie in subspaces of dimension $k$ of both $S$ and $T$, making them not half-dimension orthogoval.

Given \cref{thm:hyperplane-ortho} we might hope that $\Phi_{-1}$ would sometimes give half-dimension orthogoval projective spaces, but some computer exploration has not found any other than the line-orthogoval spaces for $k=1$.  It is unknown whether there are any half-dimension-orthogoval spaces of dimension larger then 2.

\section{Conclusions}

We have shown the existence
of many sets of spaces satisfying the generalized definition of orthogoval, \cref{main-def}, and have shown the existence of pairs of projective spaces of certain dimensions that satisfy the stronger \cref{def:askew}.
Much exploration remains to be done, and many questions are open:
\begin{itemize}
\item Can we find pairs of orthogoval affine spaces over fields of odd characteristic larger than 3?
  \item Are there orthogoval projective spaces of odd dimension besides those of \cref{thm:lo-pg3-f3} over $\F_3$?
  \item Is there a pair of orthogoval $\PG(r,\F_2)$ for all $r \geq 2$?
  \item Can the upper bounds of \cref{sec:bounds} be improved?
  \item Can we find askew affine spaces of dimension larger than 2?
  \item Can we find askew projective spaces of dimensions not covered by \cref{thm:hyperplane-ortho}?
  \item Do there exist any half-dimension orthogoval spaces of dimension larger than 2?
\end{itemize}

\bibliographystyle{plain}
\bibliography{higher-dimensions}

\end{document}